\numberwithin{equation}{section}
\newcommand{\vv}{\upsilon}
\newcommand{\M}{\mathcal{M}}
\newcommand{\ts}{\thicksim}
\newcommand{\R}{\mathbb{R}}
\newcommand{\NN}{\mathcal{N}}
\newcommand{\OO}{\mathcal{O}}
\newcommand{\pp}{\partial}
\newcommand{\y}{\textrm{y}}
\newcommand{\Q}{\textrm{Q}}
\newcommand{\N}{{\bf N}}
\newcommand{\T}{\mathcal{T}}
\newcommand{\A}{\alpha}
\newcommand{\D}{\delta}
\newcommand{\la}{\lambda}
\newtheorem{lemma}{Lemma}[section]
\newtheorem{prop}{Proposition}[section]
\newtheorem{theo}{Theorem}
\title{catenoidal layers for the Allen-Cahn equation in bounded domains}
\author[O. Agudelo]{Oscar Agudelo}
\address{\noindent O. Agudelo - Department of Mathematics,
Zapadoescka Univerzita v Plzni}
\email{oiagudel@ntis.zcu.cz}
\author[M. del Pino]{Manuel del Pino}
\address{\noindent M. Del Pino - Departamento de Ingenier\'{\i}a Matem\'atica and
CMM, Universidad de Chile, Casilla 170 Correo 3, Santiago, Chile.}
\email{delpino@dim.uchile.cl}
\author[J. Wei]{Juncheng Wei}
\address{\noindent J. Wei - Department of Mathematics, University of British Columbia, Vancouver BC V6T 1Z2, Canada }
\email{jcwei@math.ubc.ca}
\begin{document}

\maketitle

\begin{center}
\large{ Dedicated to Professor Haim Brezis on the occasion of his 70$^{th}$ birthday, with deep admiration}

\end{center}

\begin{abstract}
In this paper we present a new family of solutions to the singularly perturbed Allen-Cahn equation
$\A^2 \Delta u + u(1-u^2)=0, \quad \hbox{in }\Omega\subset \R^N
$
where $N=3$, $\Omega$ is a smooth bounded domain and $\A>0$ is a small parameter. We provide asymptotic behavior which shows that, as $\A\to0$, the level sets of the solutions collapse onto a bounded portion of a complete embedded minimal surface with finite total curvature that intersects orthogonally $\pp \Omega$ of the domain and that is non-degenerate respect to $\Omega$. We provide  explicit examples of surfaces to which our result applies.
\end{abstract}


\section{Introduction}
\subsection{Preliminary discussion}
In this paper we study the singulary perturbed boundary value problem
\begin{equation}\label{SPAC}
\A^2\Delta u + u(1-u^2)=0, \quad \hbox{in } \Omega,\quad  \quad \frac{\pp u}{\pp n}=0, \quad \hbox{on }\pp \Omega.
\end{equation}
where $\A >0$ is a small parameter, $\Omega\subset \R^N$ is a smooth bounded domain and $n$ is the inward unit normal vector to $\pp \Omega$.

\medskip
Solutions to \eqref{SPAC} correspond exactly to critical points of the Allen-Cahn energy
$$
J_{\A}(u):= \int_{\Omega}\frac{\A}{2}|\nabla u|^2 + \frac{1}{4\A}(1-u^2)^2, \quad u \in H^1(\Omega).
$$

\medskip
Equation \eqref{SPAC} arises for instance in the gradient theory of phase transition when modelling the phase of a material placed in $\Omega$ or when studying stationary solutions for bistable reaction kinetics, see \cite{29}.

Observe that $u=\pm 1$ are global minimizers of $J_{\A}$, representing  stable phases of two different materials isolated in $\Omega$.

\medskip
We are interested in solutions in solutions $u$ connecting the stable phases $\pm 1$. As described in \cite{36}, solutions of this type are expected to have narrow transtition layer from $-1$ to $+1$ with a nodal set that is asymptotically locally stationary for the perimeter functional. To be more precise in \cite{36}, the author showed that a family of local minimizers $\{u_{\A}\}_{\A}$ of $J_{\A}$ with uniformly bounded energy must converge in $L^1(\Omega)$, up to a subsequence,  to a function $u^*$, where
$$
u^*=\chi_{\Lambda} - \chi_{\Omega-\Lambda}
$$
where $\chi_{E}$ is the characteristic function of a set $E$ and $\Lambda \subset \Omega$ minimizes perimeter. In this case, as $\A \to 0$
\begin{equation}\label{ACENERGY}
J_{\A}(u_{\A})\to Per(\Lambda)\,\left(\int_{\R}\frac{1}{2}|w'|^2 +\frac{1}{4}(1-w^2)^2dt\right)
\end{equation}
where $w$ is the solution of
\begin{equation}\label{heteroclinic}
w'' + w(1-w^2)=0,\quad \hbox{in } \R, \quad \quad w'>0, \quad \quad w(\pm \infty)=\pm 1
\end{equation}
which is given by the explicit function
$$
w(t)=\tanh\left(\frac{t}{\sqrt{2}}\right), \quad t\in \R.
$$

Roughly speaking, the above assertion means that the level sets of $u_{\A}$ converge to $\pp \Lambda$ as $\A \to 0$. This result provided the intuition that ultimately lead to important developments in the theory of $\Gamma-$convergence and put into light a deep connection between the Allen-Cahn equation and the theory of minimal surfaces.  We refer reader to \cite{strongConvergenceCaffarelli1995,32, hutchinson_tonegawa00,33, 34} for related results and stronger notions of convergence.

\medskip
The connection between the Allen-Cahn equation and the theory of minimal surfaces has been explored in order to produce nontrivial solutions of \eqref{SPAC}, but the general understanding of solutions to these equation is far from being complete. In this regard
it is natural to ask for existence and asymptotic behavior of solutions to \eqref{SPAC} in the general setting. For the case of minimi\-zers we refer the reader to \cite{BardiPerthame, GarzaHumePadilla, PadillaTonegawa,SternbergZumbrun} and references there in. We also remark that local minimizers in convex domains are the constants $\pm 1$, see \cite{Matano, CastenHolland}.

\medskip
In low dimensions, for instance, $N=2$ Kohn and Sternberg in \cite{KohnSternberg}, using a measure theoretical approach  and taking advantage of the aforementioned intuition, constructed local minimizers $u_{\A}$ to \eqref{SPAC} with interfaces collapsing onto a fixed minimizing segment $\Gamma_0$ inside $\Omega$ that cuts $\pp \Omega$ perpendicularly and satisfies \eqref{ACENERGY}.

\medskip
In \cite{Kowalczyk} a situation similar to that one described in \cite{KohnSternberg} is consider but lifting the minimizing assumption on $\Gamma_{0}$, for nondegeneracy of this segment respect to the domain. Nodegeneracy of $\Gamma_0$ in this case is stated as
$$
K_0 + K_1 -|\Gamma_0|K_0K_1 \neq 0
$$
where $K_0,K_1$ are the curvatures of $\pp\Omega$ at the points where $\Gamma_0$ cuts $\pp \Omega$ orthogonally.

This geometrical condition is equivalent to the fact that the eigenvalue problem
$$
h'' = \la h, \quad \hbox{in }(0,l), \quad \quad
K_0h(0)+h'(0)=0, \quad K_1h(l)-h'(l)=0, \quad l=|\Gamma_0|
$$
does not not have $\la=0$ as an eigenvalue. The author also provides a full description of the solutions which can have Morse Index either one or two depending on the sign of $K_0$ and $K_1$.

\medskip
Later this construction was generalized in \cite{delPinoKowalczykWei} under the same geometrical setting described in \cite{Kowalczyk}, but for multiple transitions that in the limit collapse onto the segment $\Gamma_0$. There the transition layers at main order interact exponentially respect to their mutual distances giving rise to Toda system of odes.

\medskip
In dimension $N=3$, Sakamoto in \cite{Sakamoto} constructed solutions to \eqref{SPAC} having a narrow transition through a planar disk that cuts orthogonally the boundary of the domain and is nondegenerate in a suitable sense. The author also provides a characterization for this nondegeneracy in terms of spectrum of the dirichlet to neumann map of the planar disk. As for higher dimension in the setting of manifolds, Pacard and Ritore in \cite{PacardRitore} constructed solutions having a narrow transition along a codimension one nondegenerate minimal submanifold.

\medskip
In the the spirit of the results mentioned above, we also want to refer the reader to \cite{37, DuGui10,DuWang12,DuLai} dealing with similar results for the inhomogeneous allen-cahn equation and \cite{JunYangJunchengWeiI, JunYangJunchengWeiII,GuoJunYang} for a semilinear elliptic problems where resonance phenomena appear.

\medskip
The underlying geometric problem is the existence of minimal surfaces inside the domain $\Omega$ intersecting the boundary $\partial \Omega$ orthogonally. This problem, in a general three dimensional compact Riemannian manifold, has been completely settled in a recent paper by M. Li \cite{Li}. For earlier results in this direction we refer to Fraser-Li \cite{Fraser-Li} and Fraser-Schoen \cite{FraserSchoen}. The uniqueness  of the critical catenoid in a ball is proved by Fraser-Schoen \cite{FraserSchoen2}.

\subsection{Main result}
Our goal in this paper is to generalize the results in \cite{Kowalczyk,Sakamoto} respectively by taking $N=3$, and a more general class of minimal surfaces for limiting nodal set.

   \medskip
In order to state our main result, let $M$ be a complete embedded minimal surface of finite total curvature in $\R^3$. For over a century there were only two known examples of such surfaces, namely the plane and the catenoid. In \cite{CostaI,CostaII} Costa gave the first nontrivial example of such surface with genus one, being properly embedded and having two catenoidal connected components outside a large ball sharing an axis of symmetry and another planar component perpendicular to this axis. Later this construction was generalized in \cite{HoffmanMeeksI,HoffmanMeeksII} to surfaces having the same look as the Costa's surface far away but with arbitrary genus. We refer the interested reader to \cite{Kapouleas,PerezRos} and references there in, for related results and further  generalizations.

\medskip
It is known that $M$ is orientable and $\R^3 - M$ has exactly two connected components namely $S^+$ and $S^-$, see \cite{HoffmanKarcher}. We set by $\nu: M \to \mathcal{S}^2$ the fixed choice of the unit normal vector of $M$ pointing towards $S^+$.

\medskip
For $x=(x_1,x_2,x_3)\in\R^3$ we denote
$$
r=r(x)=\sqrt{x^2_1+x^2_2}.
$$

It is also known that for some large but fixed $R_0>0$, outside the infinite cylinder $r>R_0$, $M$ decomposes into finite connected components, say $M_1,\ldots,M_m$, which from now on we will refer to as the ends of $M$. For every $k=1,\ldots,k$, there exist a smooth function $F_k=F_k(y')$ such that
$$
M_k=\left\{(y',y_3)\in \R^3\,:\, r(y',y_3)>R_0 ,\quad y_3=F_k(y') \right\}
$$
where $F_k$ has the asymptotic expansion
\begin{equation}\label{asymptoticEnds}
F_k(y')= a_k\log(r) + b_k + b_{ik}\frac{y_i}{r^2} + \OO(r^{-3}), \quad \hbox{as }r\to \infty
\end{equation}
for some constants $a_k,b_k,b_{ik}$ satisfying
$$
a_1\leq a_2\leq \dots\leq a_k, \quad \sum_{k=1}^{m} a_k=0
$$
and relation \eqref{asymptoticEnds} can be differentiated. In this case we set the connected component $S^+$ of $\R^3-M$ to be the one containing the axis $x_3$ which corresponds to the axis of symmetry of the ends $M_1, \ldots,M_k$.

\medskip
We denote by $\nu:M\to S^2$ the unit normal vector to $M$ pointing towards $S^+$ and we take \emph{Fermi coordinates} near $M$
$$
x=y+z\nu(y),\quad y\in M, \quad |z|<\eta + \D\log(2+r(y))
$$
for some $\eta,\D>0$ small. From now on $\D$ will represent a small, but fixed number, independent of $\A>0$. Observe that also that $\R^3$ is spanned by the moving frame $T_{y}(M)\oplus\R_{\nu(y)}$ and that $z$ corresponds to the signed distance to $M$, i.e
$$
|z|=dist(x,M), \quad x=y+z\nu(y)
$$
for every $y\in M$ and $z$ small.

\medskip
Next, consider a smooth bounded domain $\Omega$ such that
\begin{itemize}
\item[(i).] $\Omega$ contains a portion of $M$ which we denote by $\M$.

\medskip
\item[(ii).]  $\Omega - \bar{\M}$ has two connected components  which, abusing the notation, we naturally denote as $S^+, S^-$ with the same convention as before.

\medskip
\item[(iii).] $\pp \Omega \cap M = \cup_{k=1}^m M_{k}\cap \pp \Omega$ where for every $k=1, \ldots ,m$,  $C_k:=M_k \cap \pp\Omega$ is a smooth closed simple curve.
\end{itemize}

Notice that the $\pp \Omega \cap \bar{\M}$ consists of nonintersectiong closed curves, since the curve $C_k \subset M_{k}$.

\medskip
Following \cite{KohnSternberg, Kowalczyk, delPinoKowalczykWei} in order to produce solutions of \eqref{SPAC} we need some kind of criticality and nondegeneracy assumptions on $\M$ respect to $\bar{\Omega}$. To do so, let us introduce $\Delta_{\M}$ the laplace-beltrami operator of $M$, and $|A_{\M}|$ the norm of the second fundamental form of $M$ and let us consider now the following eigenvalue problem
\begin{equation}\label{nondegM}
\Delta_{\M} h + |A_{\M}|^2 h = \la h, \quad \hbox{in }\M, \quad \frac{\pp h}{\pp \tau} + \kappa(y)h =0, \quad \hbox{on }\M
\end{equation}
where $\tau$ represents the inward unit normal direction to $\M$ and $\kappa(y)$ is given by
$$
\kappa(y):= \left<\frac{\pp n}{\pp \nu};\nu\right> ,\quad y\in \M.
$$
where we recall that $n$ is the unit normal vector to $\pp \Omega$. We remark that since $\M$ is a minimal surface it follows that
$$
|A_{\M}|^2=-2K_{\M}
$$
with $K_{\M}$ being the gaussian curvature of $\M$.

\medskip
Our crucial assumptions on $\M$ are the following.

\begin{itemize}
\item[(I).]$\M$ cuts orthogonally $\pp \Omega$ along the curves $C_k$, for $k=1,\ldots,m$.

\medskip
\item[(II).] The eigenvalue problem \eqref{nondegM} in $H^1(\M)$ does not have $\la =0$ as an eigenvalue.
\end{itemize}

\medskip
As stated in \cite{GuoJunYang}, from assumption (I) it follows that $\tau$ and $n$ must be parallel along every curve $C_k$ and  consequently these curves must be geodesics in $\pp \Omega$ in the direction of $\nu$ since their the normal vectors in $\pp \Omega$ are parallel to $n$. Therefore, the quantity $\kappa(y)$  corresponds to  the geodesic curvature of $\pp \Omega$ in the direction $\nu(y)$ for $y\in C_k$.

\medskip
Our main result is the following.

\begin{theo}\label{theo1}
Assume conditions (i)-(ii)-(iii) and (I)-(II). Then for every $\A>0$ small enough there exists a solution $u_{\A}$ to \eqref{SPAC} such that
$$
u_{\A}(x)= w\left(\frac{z - h(y)}{\A} \right) + \OO_{H^1(\Omega)}(\A),
$$
for $x=y+z\nu(y)\in \NN = \{dist(\cdot, M)<\eta\}\cap \Omega$, where the function $h$ solves at main order the boundary value problem
\begin{equation}\label{REDUCEDOPE}
\Delta_{\M} h + |A_{\M}|^2 h = 0, \quad \hbox{in }\M, \quad \frac{\pp h}{\pp \tau} + \kappa(y)h =\OO(\A) , \quad \hbox{on }\pp \M.
\end{equation}

While outside $\NN$,
\begin{equation}
u_{\A}(x)=\left\{
\begin{array}{ccc}
+1,& x\in S^+\\
\\
-1,&x\in S^{-}
\end{array}
\right.
\end{equation}
as $\A \to 0$.
\end{theo}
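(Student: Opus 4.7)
The plan is to follow the infinite-dimensional Lyapunov-Schmidt strategy developed for the Allen-Cahn equation by del Pino-Kowalczyk-Wei and Pacard-Ritore. First I would parametrize candidate nodal sets as normal graphs over $\M$: for a smooth $h:\M\to\R$ of small norm, set $\M_h:=\{y+h(y)\nu(y):y\in\M\}$ and define in Fermi coordinates the leading approximation
$$u_1(x)\;=\;w\!\left(\frac{z-h(y)}{\A}\right),\qquad x=y+z\nu(y)\in\NN,$$
extended by $\pm 1$ outside $\NN$ through a smooth cut-off. Near each boundary curve $C_k$, assumption (I) forces $\nu$ to be tangent to $\pp\Omega$, so after a local straightening change of variable and the addition of an $\OO(\A)$ correction one can enforce the Neumann condition on $u_1$ at the required order while keeping the ansatz smooth up to $\pp\Omega$.

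Next I would compute the error $E(u):=\A^2\Delta u+u(1-u^2)$ using the Fermi-coordinate expansion of $\Delta$. The minimality of $M$ kills the mean-curvature correction at $z=0$, yielding at main order
$$\A^{-1}E(u_1)\;=\;-\,w'\!\left(\tfrac{z-h}{\A}\right)\bigl[\Delta_\M h+|A_\M|^2 h\bigr]+\OO(\A)\quad\text{in }\NN,$$
with analogous boundary contributions near each $C_k$ responsible for the Robin term $\pp_\tau h+\kappa h$. Writing a genuine solution as $u_{\A}=u_1+\phi$, the perturbation $\phi$ satisfies $\LL_{\A}\phi=-E(u_1)-Q(\phi)$ under the Neumann boundary condition, where $\LL_{\A}$ is the linearization at $u_1$ and $Q$ the superlinear remainder. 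Since the one-dimensional model $\pp_{tt}+(1-3w^2)$ has kernel spanned by $w'$, I would solve the projected problem
$$\LL_{\A}\phi=-E(u_1)-Q(\phi)+c(y)\,w'\!\left(\tfrac{z-h}{\A}\right),$$
subject to the orthogonality $\int_\R \phi(y+z\nu(y))\,w'((z-h)/\A)\,dz=0$ for every $y\in\M$. A priori estimates in weighted $L^\infty$ norms, exploiting the exponential decay of $w'$ transverse to $\M$, give invertibility of the projected linear operator, and a contraction mapping argument then yields a unique small $\phi=\phi(h)$ depending smoothly on $h$.

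The remaining step is to choose $h$ so that $c(y)\equiv 0$. Testing the projected equation against $w'$ and dividing by $\int_\R(w')^2$, this reduced equation takes exactly the form \eqref{REDUCEDOPE}: an interior Jacobi equation $\Delta_\M h+|A_\M|^2 h=\OO(\A)$ together with the boundary condition $\pp_\tau h+\kappa(y)h=\OO(\A)$, the latter coming from the boundary contribution in the Fermi-coordinate expansion and the identification of $\kappa$ as the geodesic curvature of $\pp\Omega$ in direction $\nu$. Assumption (II) says precisely that the linear Jacobi-Robin operator is invertible in $H^1(\M)$, so a further fixed-point argument produces a small $h$ solving the reduced system, and the corresponding $u_{\A}=u_1+\phi(h)$ is the desired solution of \eqref{SPAC}.

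The principal obstacle is the boundary analysis. Both the construction of $u_1$ and the linear theory must be carried out carefully near the curves $C_k$: the Fermi ansatz has to stay smooth up to $\pp\Omega$, the Neumann condition on $\phi$ must be compatible with the orthogonality that defines the projection, and the boundary terms produced by integration by parts in the reduction must be identified with exactly $\pp_\tau h+\kappa(y)h$ up to $\OO(\A)$. A second technical point is to work in norms strong enough to control derivatives of $\phi$ at the boundary (so that the Neumann trace makes sense) yet weak enough that the contraction mapping still closes; this forces the use of weighted Hölder or $L^\infty$ spaces tailored to the exponential decay across $\M$.
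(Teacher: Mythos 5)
Your outline follows essentially the same infinite-dimensional Lyapunov--Schmidt strategy that the paper implements: a Fermi-coordinate ansatz $w((z-h(y))/\A)$ interpolated with $\pm 1$ outside a tubular neighborhood, a boundary correction designed so that after (I) the Robin data $\pp_\tau h+\kappa h$ emerge, a projected linear problem with orthogonality to $w'$ along the normal fibers, a priori estimates in exponentially-weighted norms, and a final reduced Jacobi--Robin system whose solvability is guaranteed by assumption (II). The paper carries out the same scheme but is more quantitative at several stages: after rescaling to $\Omega_{\A}$ it adds an interior correction $\phi_1=\A^2|A_M|^2\psi_1(t)$ to kill the $t\,w'(t)$ part of the error that is orthogonal to $w'$ before the reduction (your error estimate $\A^{-1}E(u_1)=-w'[\Delta_\M h+|A_\M|^2 h]+\OO(\A)$ is correct for the projection, but the full inner error contains this extra $\OO(\A^2)$ term which the paper removes so the contraction closes at a sharper rate); it uses a two-step boundary correction $\phi_2=\OO(\A)$, $\phi_3=\OO(\A^2)$, localized with cut-offs near each end $M_k$, rather than a single $\OO(\A)$ term; it separates the fast/slow parts of the perturbation via a gluing decomposition $\varphi=\zeta_2\phi+\psi$ with $\psi=\OO(e^{-c/\A})$; and it works in $W^{2,p}$-weighted norms with $3<p\le\infty$ rather than pure $L^\infty$ so that the boundary trace of $\nabla\phi$ is controlled by Sobolev embedding. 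These are refinements of, not departures from, your proposed route, and your identification of the main obstacles at the boundary is accurate.
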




Theorem \ref{theo1} provides a  rather general description of an explicit family of minimal surfaces for which the construction applies and allows us to consider more involved examples of the one presented in \cite{Sakamoto}.
We also remark that if our original surface and the domain $\Omega$ have axial symmetry one can reduce our developments to this setting and condition (II) can be recasted as requiring that $\la=0$ is not an eigenvalue, for problem \eqref{nondegM} in $H^1_{axial}(\M)$.

\medskip
The paper is organized as follows. In section \ref{JacobiOprExampl} we present briefly invertibility theory of the operator descibed in \eqref{REDUCEDOPE} with robin boundary conditions and we also disscuss some example where our result apllies. Next in section \ref{GEOMCOMP}, we present the geometric framework we will use to set up the proof of Theorem \ref{theo1}. In section \ref{APPROXSOL} we construct an accurate approximation of the solution to our problem and then in section \ref{PROOFTHEO1} we sketch the proof of our main result. The final section is devoted to present detailed proofs of lemmas and propositions used in section \ref{PROOFTHEO1}.

\medskip
{\bf Acknowledgments: } The research of the first author was supported by the Grant 13-00863S of the Grant Agency of the Czech Republic. M. del Pino has been partly supported by
a Fondecyt grant and by Fondo Basal CMM.  J. Wei is partially supported by NSERC of Canada.


\medskip
\section{jacobi operator with Robin BOundary conditions}\label{JacobiOprExampl}

In this part we consider the equation
$$
\Delta_{\M} h + |A_{\M}|^2 h
= f, \quad \frac{\pp h}{\pp \tau}+I(y)h=0, \quad \hbox{on }\pp M
$$

Using Fourier decomposition and the nondegeneracy assumption (II), it is straight-forward to verify that for any $f \in L^2(\M)$, there exists a unique solution $h\in H^2(\M)$ satisfying
$$
\|h\|_{W^{2,2}(\M)}\leq C \|f\|_{L^2(\M)}.
$$

If $p>2$ and $f\in L^{p}(\M)$ then by standard regularity theory $h\in W^{2,p}(\M)\cap C^{1,1-\frac{2}{p}}(\M)$ with a priori estimate
$$
\|h\|_{*}= \|D^2 h\|_{L^p(\M)} +\|\nabla h\|_{L^{\infty}(\M)}+ \|h\|_{L^{\infty}(\M)} \leq C\|f\|_{L^p(\M)}.
$$

Directly from  this it follows that for any $f\in L^{p}(\M)$, $g\in L^{p}(\pp M)$
$$
\Delta_{\M} h + |A_{\M}|^2 h
= f,\quad  \hbox{in }\M, \quad \frac{\pp h}{\pp \tau}+I(y)h=g, \quad \hbox{on }\pp \M
$$
has a unique solution satisfying
$$
\|h\|_{*}\leq C\left(\|f\|_{L^p(\M)} + \|g\|_{L^p(\pp \M)}\right).
$$

\subsection{Examples}
In this part we discuss some particular situations where our theorem applies. Let $M$ be the catenoid in $\R^3$ parameterized by the mapping
$$
Y(\y,\theta):= \left(\sqrt{1+\y^2}\cos\theta,\sqrt{1+\y^2}\sin\theta, \log\left(\y + \sqrt{1+\y^2}\right)\right), \quad \y\in\R, \quad \theta \in (0,2\pi)
$$
which provides coordinates on $M$ in terms of the signed arch-length of the profie curve and the rotation around the $x_3$-axis, which in our setting corresponds to the axis of symmetry of $M$.

\medskip
The unit normal vector to $M$ pointing towards $S^+$, is given by
$$
\nu (\y,\theta) =   \frac 1{\sqrt{1+\y^2}} \, (- \cos\theta, -\sin\theta,  \y), \quad \y\in\R, \quad \theta \in (0,2\pi)
$$
and we consider the \emph{Fermi coordinates}
$$
\tilde{X}( \y,\theta, z) = Y(\y,\theta) + z \nu(\y,\theta)
$$
which define a change of variables for instance on the neighborhood of $M$,
$$
\NN= \left\{ Y(\y,\theta) + z \nu(\y,\theta) : |z| < \eta +\frac 12 \ln (1+ \y^2)
\right\}
$$
for some fixed and small $\eta >0$.

\medskip
Now we assume that $\Omega$ is axially symmetric. Since $M$ has two ends and due to the axial symmetry, $\pp \Omega\cap M = C_1 \cup C_2$ where $C_1,C_2$ are parallel, nonintersecting circles parameterized respectively by
$$
Y_i(\theta):=Y(\y_i,\theta), \quad \theta \in (0,2\pi), \quad i=1,2
$$
for some fixed $\y_1<\y_2$.

\medskip
To describe $\pp \Omega$ close to the circles $Y_i(\theta)$, we can assume the existence of two smooth functions
$$
G_1,G_2:(-\eta,\eta)\to \R, \quad  G_i(0):=\y_i, \quad i=1,2
$$
so that the two systems of coordinates
\begin{equation}\label{bdrcondit}
X_i(\theta,z):=Y(G_i(z),\theta) + z\nu (G_i(z),\theta), \quad \theta \in (0,2\pi), \quad |z|<\eta, \quad i=1,2
\end{equation}
describe the set
$$
\pp \Omega \cap \NN= \{x\in \pp \Omega: x=X_i(\theta,z),\quad |z|<\eta, \quad \theta \in (0,2\pi), \quad i=0,1\}.
$$


\medskip
In order to explain geometrically conditions (I), (II), first recall that $\M=\Omega \cap M$ and consider a function $h\in C^2(\M)$ such that $\|h\|_{C^2(\M)}< \eta$. A normal deformation of $\M$ within $\Omega$ can be described by the coordinate system
\begin{equation}\label{MODIFIEDFERMICOORD}
\tilde{Y}_{h}(y,\theta)\,:=\,Y\left(\y(y,\theta),\theta\right) + h(y,\theta)\,\nu\left(\y(y,\theta),\theta\right), \quad \y_1<y<\y_2, \quad \theta \in (0,2\pi)
\end{equation}where
\begin{eqnarray*}
\y(y,\theta)&:=& \frac{G_2(h(y,\theta))- G_2(h(y,\theta))}{\y_2-\y_1}(y-\y_1) + G_{1}(h(y,\theta)), \quad \y_1<y<\y_2, \quad \theta \in (0,2\pi)
\end{eqnarray*}

Denoting $\M_h:= \tilde{Y}_{h}([\y_1,\y_2]\times(0,2\pi))$ and $\det g_h$ its respective induced metric, with the convention that $g_0$ is the induced metric of $\M$. The area functional of $\M_h$ is computed as
\begin{equation}\label{AREAFUNCT}
\mathcal{A}(\M_h):=\int_{\M_h}1\,dA_{g_h}= \int_0^{2\pi} \int_{\y_0}^{{\y_1}} \sqrt{\det g_h}dy d\theta.
\end{equation}

This area functional is of class $C^2$ and its first variation around $\M$ is given by
\begin{equation}\label{criticalcondM}
D\mathcal{A}(\M)[h]=-\int_{C_1\cup C_2} \left(\frac{\pp_zG_2(0) -\pp_zG_1(0)}{\y_2 -\y_1}(y -\y_1) + \pp_{z}G_1(0) \right)h(y,\theta)ds_{g_0} + \int_{\M}H_{\M}\,h\,dA_{g_0}
\end{equation}
where $H_{\M}$ is the mean curvature of $\M$.

\medskip
We notice from \eqref{criticalcondM} that $\M$ is critical for the area functional \eqref{AREAFUNCT} if
\begin{equation}\label{criticalityofM}
H_{\M}=0, \quad \pp_{z} G_1(0)=\pp_{z}G_2(0)=0.
\end{equation}

Therefore condition (I) is equivalent to saying that $\M$ is critical for the functional \eqref{AREAFUNCT} respect to normal perturbation of $\M$.

\medskip
Since $\M$ is a minimal surface, condition $H_{\M}=0$ is automatically satisfied. Consequently, the criticality of $\M$ reduces to saying that $\M$ must cut $\pp \Omega$ orthogonally.

\medskip
Assuming condition (I), the second variation of the area functional around $\M$ is given by the quadratic form
$$
D^2\mathcal{A}(\M)[h,h]:= (-1)^{i+1}\int_{Ci} \, \,\pp_{zz}G_i(0)h^2(y)ds_{g_0}  + \int_{\M}\left(|\nabla_{\M} h|^2 - |A_{\M}|^2h \right)dA_{g_0}
$$
and stability properties of $\M$ respect to $\Omega$ are analyzed through the linear robin boundary value problem
\begin{equation}\label{JACOBIROBINBDVALPROB}
\Delta_{\M} h + |A_{\M}|^2 h =\la\, h, \quad \hbox{in }\M, \quad \frac{\pp h}{\pp \tau_i} +\pp_{zz}G_i(0)h=0, \quad \hbox{on }\pp \M.
\end{equation}

\medskip
We make the important remark that, under assumption (I), $n=\tau_i$ where $\tau_i$ is the inward unit tangent vector of the profile curve of the catenoid along the circle $C_i$.

\medskip
Also observe that $K_i:=(-1)^{i+1}\pp_{zz} G_i(0)$ corresponds to the curvature of the integral curve of $\pp \Omega$ in the direction of $\nu$ along the circle $C_i$.

\medskip
From the axial symmetry, nondegeneracy of $\M$ reduces asking that the only solution to the boundary value problem
$$
\pp_{\y\y} h + \frac{\y}{1+\y^2}\pp_{\y} h +\frac{2}{(1+\y^2)^2}h =0, \quad \y_1<\y<\y_2, \quad \frac{\pp h}{\pp \y}(\y_i) +(-1)^iK_ih(\y_i)=0.
$$
is the trivial one.

\medskip
The linear equation
\begin{equation}\label{JACOBIOPE}
\Delta_{M} h + |A_{M}|^2h =0, \quad \hbox{in } M
\end{equation}
has two axially symmetric entire solutions
$$
z_1(y)=y\cdot e_3 , \quad z_2(y)=y\cdot \nu(y),\quad  y \in M
$$
corresponding  respectively to the invariances of the entire catenoid $M$ under translations along the vertical axis and dilations. We refer the reader to section 4 in \cite{AgudelodelPinoWei}  and \cite{9} for full details.

\medskip
We directly check that in coordinates $y=Y(\y,\theta) \in M$
$$
z_1(\y)=\frac{\y}{\sqrt{1+\y^2}}, \quad z_2(\y)=\frac{\y}{\sqrt{1+\y^2}}\log(\y + \sqrt{1+\y^2}) -1, \quad \y\in  \R
$$
from where we observe that $z_1$ is odd and $z_2$ is even.

\medskip
We remark also that $z_1, z_2$ are strictly increasing in the variable $\y \in (0, \infty)$, since
$$
\pp_{\y}z_1(\y)= \frac{1}{(1+\y^2)^{\frac{3}{2}}} , \quad
\pp_{\y}z_2(\y)= \frac{\y}{1+ \y^2} + \frac{\log\left(\y + \sqrt{1 + \y^2}\right)}{(1+\y^2)^{\frac{3}{2}}}, \quad \y \in \R
$$
and also $z_2$ changes sign at only one point $\y=\y_0>0$ and $z_2(\y)<0$ for $-\y_0 < \y<\y_0$. While $z_1$ changes sign once at $\y=0$.

\medskip
Using basic theory of odes and following the developments from section 4 in \cite{AgudelodelPinoWei}, we find that $\la=0$ is not an eigenvalue of \eqref{JACOBIROBINBDVALPROB} if and only if
\begin{equation}\label{NONDEGDET}
\det\left[
\begin{array}{cc}
\pp_{\y}z_1(\y_1) + K_1 z_1(\y_1) & \pp_{\y}z_2(\y_1) + K_1 z_2(\y_1)\\
\pp_{\y}z_1(\y_2) - K_2 z_1(\y_2) & \pp_{\y}z_2(\y_2) - K_2 z_2(\y_2)\\
\end{array}
\right]\neq 0.
\end{equation}

\medskip
In particular, we directly check that in the case that $\pp \Omega$ is almost flat, i.e $K_1=K_2=0$, $\M$ is nondegenerate. We also remark that condition \eqref{NONDEGDET} is clearly invariant under dilations.

\medskip
Consider the case
$$
\Omega:=\left\{\frac{x_1^2}{a^2} + \frac{x_2^2}{a^2} + \frac{x_3^2}{b^2}=1\right\}
$$
an ellipsoid of revolution, where $a,b>0$ and $\M$ being and  even catenoidal portion inside $\Omega$.

\medskip
In this case from the disscussion above, we can describe $\pp \Omega$ near $\M$ as the level set
$$
\frac{1}{a^2}\left(\sqrt{1 + \y^2} - \frac{z}{\sqrt{1+\y^2}}\right)^2+\frac{1}{b^2}\left(\log(\y + \sqrt{1 + \y^2}) + z\,\frac{\y}{\sqrt{1+\y^2}}\right)^2 =1.
$$

Using implicit function theorem we find that
$$
\pp_{z}G_2(0)= -\pp_{z}G_1(0)= \frac{-\frac{1}{a^2} + \frac{1}{b^2}\frac{\bar{\y}}{\sqrt{1+\bar{\y}^2}}\log(\bar{\y} + \sqrt{1+\bar{\y}^2})}{\frac{\bar{\y}}{a^2} + \frac{1}{b^2}\frac{1}{\sqrt{1+\bar{\y}^2}}\log(\bar{\y} + \sqrt{1+\bar{\y}^2})}
$$
so that for $\M$ to be critical respect to the ellipsoid $\Omega$ we need to choose the point $\bar{\y}$ satisfying
$$
z_2(\bar{\y})=\frac{1}{a^2} - \frac{1}{b^2}
$$
and from the monotonicity of $z_2(\y)$ we see that once the ellipsoid has been fixed there is exactly one catenoid that cuts the boundary of the ellipsoid perperdicularly. This is our candidate of critical and nondegenerate minimal surface inside the ellipsoid.

\medskip
In order to compute the geodesic curvatures we apply again the implicit function theorem to find that
$$
K_2=K_1 = -\frac{1}{a}+\frac{1}{b^2}
$$

\medskip

In the case $\Omega=B_R(0)$, with $R=a=b$, we observe that $\M$ is the so called critical catenoid.  This setting appeared in \cite{FraserSchoen} as solution of an maximization problem for the first steklov eigenvalue of  the dirichlet to neumann mapping in bounded domains.

\medskip
Using the same notation as above, one can verify that
$K_1=K_2=\frac{\bar{\y}}{1+\bar{\y}^2}= \frac{1}{R}$ and the nondegeneracy condition translates into  saying that $\bar{\y}\neq \frac{1}{\sqrt{2}}$ which holds true since $\bar{\y}\sim 1.5$.

\medskip
Concerning stability issues let us consider the quadratic form
$$
\Q(h,h):= - \int_{\pp \M}k(y)h^2d s_{g_0} + \int_{\M} \left(|\nabla h|^2 - |A_{\M}|^2h^2 \right)dA_{g_0}, \quad h\in H^1(\M).
$$

\medskip
We first stablish conditions $\M$ to be minimizer of the area functional.

\begin{prop}\label{minimizingsituation}
Assume  $z$ is a smooth positive solution to the linear equation \eqref{JACOBIOPE} in an open set of $M$, containing $\M$. For every smooth function $\varphi$ in $\M$ it holds that
$$
Q(\varphi,\varphi)=\int_{\pp \M} \left(\frac{\pp log(z)}{\pp \tau} - k(y)\right)\varphi^2ds_{g_0} + \int_{\M}|\nabla \varphi - \frac{\varphi}{z}\nabla z|^2 dA_{g_0}
$$
where $\tau$ is the outer normal vector to $\pp \M$. Consequently if
$$
k(y) < \frac{\pp \log(z)}{\pp \tau}, \quad y\in \pp \M
$$
then $\M$ is minimizer for the area functional.
\end{prop}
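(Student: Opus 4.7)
The plan is to perform the classical Fischer--Colbrie--Schoen ``$\log z$-substitution'', using the positive Jacobi field $z$ to rewrite $\Q(\varphi,\varphi)$ as a sum of manifestly nonnegative terms plus a boundary correction. Since $z$ is smooth and strictly positive on a neighborhood of $\bar{\M}$, I would set $\psi := \varphi/z \in C^\infty(\bar{\M})$, and note the algebraic identity $z\nabla\psi = \nabla\varphi - (\varphi/z)\nabla z$, which already identifies $|\nabla\varphi - (\varphi/z)\nabla z|^2$ with $z^2|\nabla\psi|^2$.

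Next, I would expand
$$
|\nabla\varphi|^2 = z^2|\nabla\psi|^2 + \nabla(\psi^2)\cdot(z\nabla z) + \psi^2|\nabla z|^2
$$
and integrate the middle summand by parts on $\M$. The divergence theorem contributes a boundary piece $\int_{\pp\M} z\psi^2(\pp z/\pp\tau)\,ds_{g_0}$ and an interior piece $-\int_{\M}\psi^2\bigl(|\nabla z|^2 + z\Delta_{\M}z\bigr)\,dA_{g_0}$. Invoking the Jacobi equation $\Delta_{\M}z + |A_{\M}|^2 z = 0$ converts $-z\Delta_{\M}z$ into $|A_{\M}|^2 z^2$, so the $z\Delta_{\M}z$ contribution becomes $\int_{\M}|A_{\M}|^2\varphi^2\,dA_{g_0}$, while the two $\psi^2|\nabla z|^2$ terms cancel exactly. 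Rewriting $z\psi^2\,\pp z/\pp\tau = \varphi^2\,\pp\log z/\pp\tau$ on $\pp\M$ and subtracting the Robin contribution $-\int_{\pp\M}k\,\varphi^2\,ds_{g_0}$ from $\Q(\varphi,\varphi)$ yields the identity stated in the proposition.

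For the minimality consequence, the pointwise hypothesis $k(y) < \pp\log z/\pp\tau$ makes both terms on the right side of the identity nonnegative, so $\Q(\varphi,\varphi)\geq 0$ for every $\varphi\in H^1(\M)$. Equality forces $\nabla\psi\equiv 0$, hence $\varphi = cz$ for some constant $c$, which together with the strict boundary inequality gives $c=0$; thus $\Q$ is strictly positive on $H^1(\M)\setminus\{0\}$, and by the second variation formula derived in \eqref{AREAFUNCT}--\eqref{criticalcondM} this identifies $\M$ as a strict local minimizer of the area functional among the normal perturbations $\M_h$. There is no conceptual obstacle in the argument: everything reduces to the integration by parts once the substitution $\psi = \varphi/z$ has been made, and the only bookkeeping required is to match the sign convention for the outward conormal $\tau$ adopted in the proposition.
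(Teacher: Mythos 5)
Your argument is correct and coincides with the paper's: the paper's one-line proof ``test \eqref{JACOBIOPE} against $\psi=\varphi^2/z$ and integrate by parts'' is exactly the completion-of-square computation you carry out, with the only cosmetic difference being that you introduce $\psi=\varphi/z$ as a new variable and expand $|\nabla\varphi|^2=|z\nabla\psi+\psi\nabla z|^2$ before integrating by parts, while the paper writes the test function as $\varphi^2/z$ directly. Your additional remarks on the equality case and the sign convention for $\tau$ are consistent with the statement and fill in the ``consequently'' clause, which the paper leaves implicit.
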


\begin{proof}
The proof follows directly testing equation \eqref{JACOBIOPE} against $\psi=\frac{\varphi^2}{z}$ and integrating by parts.
\end{proof}

\medskip
We directly see from the previous proposition that if $K_1,K_0 <0$ and $\M$ is an even catenoidal portion with small area, we are in the case $\Omega$ nonconvex and $\M$ a local minimizer for the area functional. To see this it is enough to consider the value $\bar{\y}>0$ from above and $\M$ to be an even piece of catenoid contained in the portion associated to $\bar{\y}$ where $z=-z_2>0$.

\medskip
If $\M$ has large enough area it resembles the entire catenoid which has positive morse index. Cutting off an eigenfunction of $\Delta_{M} + |A_{M}|^2$ associated to a positive eigenvalue in a way that the boundary condition is not seen one can obtain a direction where the second variation of surface area is negative and therefore a catenoidal portion $\M$ with large area is unstable.

\medskip
The former situation ocurrs also in a general complete embedded minimal surface with finite total curvature for which the Morse Index is finite.

\section{Geometrical computations}\label{GEOMCOMP}

In this part we compute the euclidean laplacian in a neighborhood $\NN$ of $\M$ and the normal derivate $\frac{\pp}{\pp n}$ in $\pp\Omega \cap \NN$ in system of coordinates that will be suitable for our subsequent developments.

\medskip
In what follows we consider a large dilation of $\M$, denoted b $M_{\A}:= \A^{-1}\M$ for $\A >0$ small. First we compute the euclidean laplacian well inside the set $\Omega$ close to $\M$. Following the developments from \cite{9}, for the complete embedded nondegenerate minimal surface $M$ with finite total curvature denote, $M_{0}$ the part of $M$ inside the cylinder $\{r(\y)<R_{0}+1\}$. To parameterize $M_0$ we take a mapping $\y \in \mathcal{U}\subset \R^2\to y:=Y_{0}(\y)$ with associated induced metric given by
$g:=\left(g_{ij}\right)_{2\times 2}$. The laplace beltrami operator of $M$ inside the cylinder can be computed as
\begin{equation}\label{LAPBELTOPR}
\Delta_{M}=  \frac{1}{\sqrt{\det(g)}}\pp_{i}\left(\sqrt{\det(g)
}g^{ij}\,\,\pp_j\right)=a^{0}_{ij}\pp_{ij} + b^{0}_{i}\pp_i
\end{equation}
where $g^{-1}=(g^{ij}))_{2\times2}$ is the inverse of $g$ and where the coefficient matrix $(a^0_{ij})_{2\times2}$ is an smooth uniformly elliptic matrix.

\medskip
Next, consider the set
$$
D:=\left\{\y=(\y_1,\y_2) \in \R^2\,:\, r(\y)>R_0\right\}
$$
and for $M_k$, the $k-th$ end of $M$, we use the parametrization
$$
\y \in D \mapsto Y_{k}(\y):= \y_i e_i + F_{k}(\y)e_3.
$$

Next, we notice that the unit normal vector to $M$ at a point $y \in M_{k}$ has the expression in coordinates
\begin{eqnarray*}
\nu(y)&:=& \frac{(-1)^k}{\sqrt{1+|\nabla F_{k}(\y)|^2}}\left(\pp_{i}F_{k}e_{i}-e_{3}\right)\\
&=& (-1)^{k}e_{3} + a_{k}\frac{\y_i}{r^{2}} + \OO(r^{-2}), \quad y=Y_{k}(\y)
\end{eqnarray*}
so that $\pp_{i}\nu = \OO(r^{-2})$ and $|A_{M}|^2=\OO(r^{-4})$ as $r\to \infty$.
\medskip

In the coordinates $Y_{k}(\y)$ on $M_k$ the metric $g:=\left(g_{ij}\right)_{2\times 2}$ satistisfies
$$
g_{ij}= \D_{ij} + \OO(r^{-2}),\quad i,j=1,2 \quad \hbox{as } r \to \infty
$$
and this relations can be differentiated in the sense that the term $\OO(r^{-2})$ gains one negative power for every time it is differentiated.

\medskip
We compute the laplace-beltrami operator on $M_{k}$, using \eqref{LAPBELTOPR} to find
\begin{equation}\label{LAPBELTOPR2}
\Delta_{M} = \Delta_{\y} + \OO(r^{-2})\pp_{ij} + \OO(r^{-3})\pp_i , \quad \hbox{on }M_k.
\end{equation}

$M$ is parameterized completely by the $m+1$ local coordinates described above and we observe that expression \eqref{LAPBELTOPR} holds in the entire $M$, where for $k=1,\ldots,m$ the coefficients on $M_k$ satisfy
$$
a^0_{ij}(y)=\D_{ij} + \OO(r^{-2}),\quad  b_i(y)=\OO(r^{-3})\quad \hbox{for }k=1,\ldots,m, \quad \hbox{as } r\to \infty.
$$

\medskip

\medskip
We can consider Fermi coordinates given by the mapping $X(y,z):= y + z \nu(y)$ which provides a change of variables in the neighborhood of $M$
$$
\NN:=\left\{x=y+z\nu(y)\,:\, |z|< \eta + \D\log\left(2 + r(y)\right)\right\}
$$
and where we have the expression
\begin{equation}
\Delta_{X}  =   \partial_{zz} + \Delta_{M} \,-\, z|A_{M}|^2 \,\pp_z\,+\, D \label{lap3}
\end{equation}
where the $\Delta_{M}$ is given in \eqref{LAPBELTOPR}, \eqref{LAPBELTOPR2} and where the differential operator $D$ has the form
$$
D \,=\, z\, a^1_{ij}(y,z)\, \pp_{ij} \,+\, z \,b_i^1(y,z)
\,\pp_{i} \,+\, z^3
\,b_3^1(y,z)\,\pp_z
$$
and the smooth functions $a_{ij}(y,z) $, $b_{i}(y,z)$  satisfy on the ends of $M$
\begin{equation}\label{AsymptRestLap2}
\begin{array}{cc}
|a_{ij}^1| \,+\, |r\, \nabla a_{ij}^1| \,=\, O( r^{-2}), \quad |b_i^1| \,+\,
|r\, \nabla b_i^1| \,=\, O( r^{-3}) \\
\\
|b_3^1| \,+\, |r\, \nabla b_3^1| \,=\, O( r^{-8})
\end{array}
\end{equation}
as $r\to \infty$, uniformly on $z$ in the neighborhood $\NN$ of
$M$, see lemma 2.1 \cite{9}.

\medskip
The term $b_3^1$ comes from the mean curvature of the normally translated surface
$$
M_{z}:= \{y+z\nu(y)\,:\, y\in M\}\subset \NN
$$
for fxed $z$ small. It is well known that
$$
H_{M_z}:= H_{M} - z|A_{M}|^2 + \underbrace{z^2(k_1^3+k_2^3)+ z^3 (k_1^4 + k_2^4) + \OO(z^4\,r^{-10})}_{b_{3}^1(y,z)}.
$$
where $k_1,k_2$ are the principal curvatures of $M$. From the asymptotics of $\nabla_{M} \nu$, we have that $k_i = \OO(r^{-2})$ and since $H_{M}=k_1+k_2 =0$, it follows that $k_1^3+k_2^3 =0$ and therefore the expansion for $b_3^1$ in \eqref{AsymptRestLap2} follows.

\medskip
Recall that for small $\A>0$ $M_{\A}:=\A^{-1}\M$ and let us denote the dilated ends of $M$ by $M_{k,\A}=\A^{-1}M_{k}$. Next, for a smooth function $h$ defined in $\M$, we consider dilated and translated Fermi coordinates
$$
X_{\A,h}(y,t):=X(\tilde{y},z), \quad \tilde{y}=\A y, \quad z=\A (t+h(\A y))
$$
for $y\in M_{\A}$ and $|t+h(\A y)|<\frac{\eta}{\A}+\frac{\D}{\A}\log(2 + r(\A y))$.

\medskip
Scaling and translating expression \eqref{lap3} we obtain
\begin{multline}\label{lap4}
\A^2 \Delta_{X}=\Delta_{X_{\A,h}}=\pp_{tt} + \Delta_{M_\A} -\A^2\{\Delta_{M}h + |A_{M}|^2h\}\pp_{t} - \A^2|A_{M}|^2t\pp_{t}
\\
-2\A \,a_{ij}^0 \pp_{i}h\,\pp_{jt} + \A^2\,a_{ij}^0\pp_{i}h\pp_{j}\,h\pp_{tt} +  D_{\A,h}
\end{multline}
where
\begin{eqnarray}\label{Rest1}
D_{\A,h} & = &   \A (t + h) a_{ij}^1(\A y,\A (t+h) ) \, (\pp_{ij} - 2\A
\pp_i h\,\pp_{it} - \A^2 \pp_{ij} h\pp_t +  \A^2 \pp_{i}h\,\pp_{j}h\,
\pp_{tt})
\nonumber\\
& + & \A^2 (t + h) b_i^1(\A y,\A  (t+h) )\, (\pp_i - \A \pp_{i} h\,\pp_t)
\nonumber\\
& + & \A^4 (t + h)^3 b_3^1(\A y,\A (t+h))\,\pp_t.
\end{eqnarray}

Expression \eqref{lap4} holds true in the region
\begin{equation}\label{regionFermiCoord}
\NN_{\A,h}:=\A^{-1}\Omega\cap\left\{y + (t + h(\A y))\nu(\A y)\,:\, |t+ h|< \frac{\eta}{\A} + \frac{\D}{\A}\log(2 + r(\A y))\right\}
\end{equation}
and we will use it to handle equation \eqref{SPAC} well inside the region $\A^{-1}(\Omega \cap \NN)$.

\medskip
In this regard, observe that our geometrical considerations above do not take into account the effect of $\pp \Omega \cap \NN$. We need to consider a new system of coordinates to handle boundary related computations.

\medskip
In this part, we will use that the entire surface $M$ is orthogonal to $\pp \Omega$. Using assumption (iii) from the introduction, for fix $k=1,\ldots,m$ and in $M_{k}$ we can assume that the closed simple curve $C_k:=M_{k}\cap \Omega$ is parameterized by a mapping
$$
\theta \in (0,l_{k}) \mapsto \gamma_k:=\gamma_k(\vv).
$$

The mapping $\gamma_k$ in turn, allows a smooth orthogonal extension to an open neighborhood of $C_{k}$ in $M_k$. Abusing the notation we write this extension as
$$
(\rho,\vv)\in \to \gamma_k=\gamma_{k}(\rho,\vv), \quad \rho\in (-\D,\D),\quad \theta\in(0,l_k)
$$
which can be chosen satisfying
$$
\gamma_k(0,\vv)=\gamma_k(\vv),\quad \pp_{\vv}\gamma_k(0,\vv)=\pp_{\vv}\gamma_{k}(\vv), \quad \pp_{\rho}\gamma_k \perp \pp_{\vv}\gamma_k
$$
 and such that $\gamma\left([0,\D)\times(0,l_{k})\right)\subset M_{k}\cap\bar{\Omega}$. In essence, the coordinates $(\rho,\vv)$ work as polar coordinates in $M_k$ near $C_k$.

\medskip
In the coordinates $\gamma_k(\rho,\vv)$ and ommiting the explicit depedence on $k$, the laplace beltrami operator of $M$ close to $C_k$, takes the form
\begin{equation}\label{LAPBELTOPR3}
\Delta_{M} = a^0_{ij}\pp_{ij} + b^0_i\pp_j, \quad i,j=\rho, \theta
\end{equation}
and where
$$
a_{\rho\rho}^0(\rho,\theta)=|\pp_{\rho}\gamma_k|^{-2}, \quad a_{\vv\vv}^0(\rho,\vv)=|\pp_{\vv}\gamma_k|^{-2}, \quad a_{\rho\vv}^0=a_{\vv\rho}^0=0.
$$

$$
b_{\rho}^0(\rho,\vv)=|\pp_{\rho}\gamma_k|^{-2}|\pp_{\vv}\gamma_k|^{-2}<\pp_{\rho\vv}\gamma_k;\pp_{\vv}\gamma_k>+|\pp_{\rho}\gamma_k|^{-4}<\pp_{\rho\rho}\gamma_k;\pp_{\rho}\gamma_k>
$$

$$
b_{\vv}^0(\rho,\vv)=|\pp_{\rho}\gamma_k|^{-2}|\pp_{\vv}\gamma_k|^{-2}<\pp_{\rho\vv}\gamma_k;\pp_{\rho}\gamma_k>+|\pp_{\rho}\gamma_k|^{-4}<\pp_{\vv\vv}\gamma_k;\pp_{\vv}\gamma_k>.
$$

\medskip
Proceeding as above and associated to the coordinate system $y=\gamma_k(\rho,\vv)$ we consider Fermi coordinates
$$
X_k(\rho,\vv,z):=\gamma_k(\rho,\vv)+z\nu(\rho,\vv)
$$
in the neighborhood of $M_k$
$$
\NN:=\left\{\gamma_k(\rho,\vv) + z\nu(\rho,\vv)\,:\, |z|<\eta + \D\log\left(2 + r(\gamma_k(\rho,\vv))\right), \quad |\rho|<\D, \quad \vv\in (0,l_{k})\right\}.
$$

To described $\Omega \cap \NN$ near $C_k$ we assume the existence of a smooth funtion $G_{k}=G_{k}(\vv,z)$ such that $G_{k}(\vv,0)=0$

\medskip
Making a translation of the integral lines of $M_k$ associated to the parameterization $\gamma_k$ in the $\rho$ direction using the function $G_k$, namely
$$
\rho(s,\vv,z): = s + G_{k}(\vv,z), \quad |s|<\D, \quad \vv \in (0,l_k), \quad |z|<\eta
$$
we can described
$\Omega\cap\NN$ using the modified Fermi coordinates
$$
\tilde{X}(s,\vv,z):=\gamma_{k}(\rho(s,\vv,z),\vv)+ z\nu(\rho(s,\vv,z),\vv)
$$
which actually provide a change of variables in the larger set
$$
\tilde{\NN}:=\left\{x=\tilde{X}(s,\theta,z)\,:\, |z|<\eta + \D \log(2 +r(\gamma_k(\rho(s,\vv,z),\theta))), \quad |s|<\D, \quad \theta \in (0,l_{k})\right\}
$$
and clearly
$$
\pp \Omega \cap \NN = \{\gamma_{k}(G_k(\vv,z),\theta) +z\nu(G_k(\vv,z)\,:\, \theta \in (0,l_k), \quad |z|< \eta + \D \log\left(2 + r(\gamma_{k}\left(G_k(\vv,z),\vv
\right))\right)\}.
$$

\medskip
Observe that
$$
\pp_{z}\tilde{X}(0,\vv,0)= \nu(0,\vv)+  \pp_{\rho}\gamma_{k}(0,\vv)\cdot \pp_{z}G_{k}(\vv,0)
$$
and
$$
\pp_{\vv} \tilde{X}(0,\vv,0)= \pp_{\theta}\gamma_{k}(0,\vv), \quad \pp_{s}\tilde{X}(0,\vv,0)=\pp_{\rho}\gamma_{k}(0,\vv)
$$
so that, by making the inner product of $\pp_z \tilde{X}(0,\vv,0)$ against $\pp_{s} \tilde{X}(0,\vv,0)$, we observe that conditon (I) is rewritten as $\pp_{z}G_k(\vv,0)=0$.

\medskip
Summarizing, we assume on the function $G_k(\vv,z)$ that
\begin{equation}\label{CONDBDR}
G_{k}(\vv,0)=0, \quad \pp_{z}G_k(\vv,0)=0, \quad \vv \in (0,l_{k})
\end{equation}

\medskip
From \eqref{CONDBDR}, the asymptotic expansion in powers of $z$ of the mapping $\tilde{X}(s,\vv,z)$ reads as
\begin{equation}\label{MODFFERMICOORD}
\tilde{X}(s,\vv,z):= \gamma_{k}(s,\vv) + z \nu(s,\vv) + \frac{z^2}{2} q_1(s,\vv) + \frac{z^3}{6}q_2(s,\vv) + \OO(z^4)
\end{equation}
where $q_1,q_2\perp \nu$ with expressions given by
$$
q_1(s,\vv):= \pp_{\rho}\gamma_{k}(s,\vv)\cdot\pp_{zz}G_k(\vv,0),\quad
q_2(s,\vv)= \pp_{\rho}\gamma_{k}\cdot\pp_{z}^{(3)}G_k(\vv,0) + 3 \pp_{\rho}\nu\cdot\pp_{zz}G_{k}(\vv,0).
$$

Taking derivaties in expression \eqref{MODFFERMICOORD} and ommiting the dependence on $k$, we can compute the induced metric of $M$, which in this coordinates takes the form
\begin{equation}\label{boundarymetric}
\tilde{g}=\left(
\begin{array}{ccc}
|\pp_{\rho}\gamma|^2&0&0\\
0&|\pp_{\vv}\gamma|^2|&0\\
0&0&1
\end{array}
\right)+z\left(
\begin{array}{ccc}
-2\tilde{M}&-2\tilde{N}&<\pp_{\rho}\gamma;q_1>\\
-2\tilde{N}&-2\tilde{R}&<\pp_{\vv}\gamma;q_1>\\
<\pp_{\rho}\gamma;q_1>&<\pp_{\vv}\gamma;q_1>&0
\end{array}
\right)+\OO(z^2)
\end{equation}
where
$$
-\tilde{M}=<\pp_{\rho} \gamma;\pp_{\rho} \nu>, \quad -\tilde{R}=<\pp_{\vv} \gamma;\pp_{\vv} \nu>, \quad -2\tilde{N}=<\pp_{\rho} \gamma;\pp_{\vv} \nu>+<\pp_{\vv}\gamma;\pp_{\rho} \nu>
$$
where all the entries of the matrices above are evaluated at $(s,\vv)$.

\medskip
Consequently, the inverse of the metric has the asymptotic expression $\tilde{g}^{-1}=(\tilde{g}^{ij})_{3\times 3}$ has the form
\begin{multline}\label{boundarymetric-1}
\tilde{g}^{-1}=\left(
\begin{array}{ccc}
|\pp_{\rho}\gamma|^{-2}&0&0\\
0&|\pp_{\vv}\gamma|^{-2}|&0\\
0&0&1
\end{array}
\right)\\
+z\left(
\begin{array}{ccc}
2|\pp_{\rho} \gamma|^{-4}\tilde{M}&2|\pp_{\rho}\gamma|^{-2}|\pp_{\vv}\gamma|^{-2}\tilde{N}&-|\pp_{\rho} \gamma|^{-2}<\pp_{\rho}\gamma;q_1>\\
2|\pp_{\rho}\gamma|^{-2}|\pp_{\vv}\gamma|^{-2}\tilde{N}&2|\pp_{\vv} \gamma|^{-4}\tilde{R}&-|\pp_{\vv} \gamma|^{-2}<\pp_{\vv}\gamma;q_1>\\
-|\pp_{\rho} \gamma|^{-2}<\pp_{\rho}\gamma;q_1>&-|\pp_{\vv} \gamma|^{-2}<\pp_{\vv}\gamma;q_1>&0
\end{array}
\right)+\OO(z^2)
\end{multline}

\medskip
We consider again the function $h$ as above and proceeding in the same fashion we take dilated and translated Modified Fermi coordinates
$$
\tilde{X}_{\A,h}(s,\theta,t)= \A^{-1}\tilde{X}(\A s,\A \theta, \A (t+ h(\A s,\A \theta)))
$$
for
$$
0<s< \frac{\D}{\A} , \quad \theta \in (0, \frac{l_{k}}{\A}), \quad |z|< \frac{\eta}{\A} + \frac{\D}{\A}\log(2 + r(\gamma(\A s, \A \theta))).
$$

\medskip
After a series of lenghty but necessary computations we arrive to the expression for the euclidean laplacian in the coordinates $\tilde{X}_{\A,h}$
\begin{equation}\label{lapbdr}
\A^2\Delta_{\tilde{X}}=\Delta_{
\tilde{X}_{\A,h}}= \pp_{tt} + \Delta_{M_{\A}} -\A^2|A_{M}|^2 \,t\pp_{t} - \A^2\left\{\Delta_{M} h +|A_{M}^2h\right\}\pp_{t} +D_{0} + \tilde{D}_{\A,h}
\end{equation}
where following \cite{GuoJunYang}, we denote
$$
l_1(\vv)=|\pp_{\vv}\gamma(0,\vv)|>0, \quad l_2(\vv)=|\pp_{\rho}\gamma(0,\vv)|>0,\quad  I(\vv)= l_2(\vv)^2\,\pp_{zz}G_{k}(\vv,0)
$$

$$
 A(\vv)=<\pp_{\rho\rho}\gamma(0,\vv);\pp_{\rho}\gamma(0,\vv)>
,\quad C(\vv)=<\pp_{\rho\vv}\gamma(0,\vv);\pp_{\vv }\gamma(0,\vv)>
$$

$$
R(\vv)= <\pp_{\vv\vv}\gamma(0,\vv);\pp_{\vv}\gamma(0,\vv)>
,\quad E(\vv)=<\pp_{\rho\vv}\gamma(0,\vv);\pp_{\rho}\gamma(0,\vv)>, \quad
$$
to find that
\begin{eqnarray}\label{BDRREST0}
\tilde{D}_0&=&-2\A \frac{I(\A \theta)}{l_1(\A\theta)l_2(\A \theta)}(t+h)\left(\pp_{st}
- \A \pp_{\rho} h \pp_{tt}\right)- \A l^{-2}_1(\A \theta)\pp_{\vv\vv}h \pp_{\theta t}  - \A l^{-2}_2(\A \theta)\pp_{\rho\rho} h\pp_{st}\nonumber\\
&&+\A^2l^{-2}_1(\A \theta)|\pp_{\vv} h|^2\pp_{tt} + \A^2 l^{-2}_2(\A \theta)|\pp_{\rho} h|^2\pp_{tt}- 2\A^3 sl_2^{-4}(\A \theta)A(\A \theta)|\pp_{\rho}h|^2\pp_{tt}
\end{eqnarray}
and
\begin{eqnarray}
\tilde{D}_{\A,h}&=& \A^2\tilde{a}_1\left(\pp_{\theta t} - \A \pp_{\vv}h \pp_{tt}\right)+\A^2\tilde{a}_2
\left(\pp_{s t} - \A \pp_{\rho}h \pp_{tt}\right)\nonumber\\
&+& \A^2\tilde{b}_1\left(\pp_{\theta } - \A \pp_{\vv}h \pp_{t}\right)+\A^2\tilde{b}_2
\left(\pp_{s} - \A \pp_{\rho}h \pp_{t}\right) \nonumber\\
&+& \A^3(t+h)^2\tilde{b}_3(\A s,\A\theta,\A (t+h))\pp_t + \A^4\tilde{R}_{\A,}\label{BDRREST1}
\end{eqnarray}
where the functions $\tilde{a}_1,\tilde{a}_1,\tilde{b}_1,\tilde{b}_2, \tilde{b}_3$ are smooth with bounded derivatives and $\tilde{R}_{\A}$ is a differential operator having $C^1$ dependence on $h$ and its derivatives.

\medskip
We remark that in the coordinates $(s,\theta)$

$$
\Delta_{M_{\A}} =
\frac{1}{l^2_1(\A \theta)}\pp_{\theta\theta} + \frac{1}{l^2_2(\A \theta)}\pp_{\rho\rho} - 2\A s \frac{A(\A \theta)}{l^4_2(\A \theta)}\pp_{ss}
+
\A\left(\frac{C(\A \theta)}{l_1^2(\A \theta)l_2^2(\A \theta)}- \frac{A(\A \theta)}{l^4_2(\A \theta)}\right)\pp_{s}
$$

\begin{equation}\label{DILATLAPBELBDR}
+\A\left(\frac{E(\A \theta)}{l_1^2(\A \theta)l_2^2(\A \theta)}- \frac{R(\A \theta)}{l^4_1(\A \theta)}\right)\pp_{\theta} + \OO(\A^2)
\end{equation}
\medskip

Next, we turn our attention to the boundary condition. It can be check directly from \eqref{boundarymetric} and \eqref{boundarymetric-1} that the boundary condition reads as
$$
\frac{\pp}{\pp n} = \sqrt{\tilde{g}^{11}}\pp_{\rho} + \frac{\tilde{g}^{12}}{\sqrt{\tilde{g}^{11}}}\pp_{\vv}+
\frac{\tilde{g}^{13}}{\sqrt{\tilde{g}^{11}}}\pp_{z}
$$
so that, after dilating and translating we find that the boundary condition becomes
$$
\pp_{s}=\A I(\A \theta) t\pp_{t} +\A\left\{\pp_{\rho} h + I(\A \theta)h\right\}\pp_{t}
$$
$$
- \left[2\A (t+h)m_1(\A \theta) + \A^2(t+h)^2\tilde{d}_1(\A \theta)\right]\pp_{s}
+\left[\A(t+h)m_2(\A \theta) + \A^2 \tilde{d}_2(\A \theta,\A(t+ h))\right]\pp_{\theta}
$$
\begin{equation}\label{bdrcondition}
\left[ \A^2(t+h)^2 I(\A \theta)m_1(\A \theta)+2\A^2 m_1(\A \theta)\pp_{\rho}h(t+h) - \A^2 m_2(\theta)\pp_{\vv}h(t+h) + \A^3\tilde{B}_{\A}
\right] \pp_{t}
\end{equation}

where
$$
m_1(\vv) = |l_2(\vv)|^{-2}<\pp_{\rho} \nu(0,\vv) ; \pp_{\rho} \gamma(0,\vv)>
$$

$$
m_2(\vv)= |l_1(\vv)|^{-2}\left(<\pp_{\vv} \nu(0,\vv) ; \pp_{\rho} \gamma(0,\vv)> + <\pp_{\rho} \nu(0,\vv) ; \pp_{\vv} \gamma(0,\vv)>\right)
$$
and
$\tilde{B}_{\A}= \tilde{B}_{\A}(\theta,t,h,\nabla_{\M} h)$
is has $C^1$ dependence in its variables.

\medskip
We mention that in the case of the catenoid and an axially symmetric  domain, following the scheme in \cite{Kowalczyk, delPinoKowalczykWei} one can parameterize with only one set of coordinates and the calculations reduce considerably.

\section{the approximation and preliminary discussion}\label{APPROXSOL}

The proof of our main result relies on a Lyapunov-Schmitt procedure near an almost solution to the equation. This section is devoted to find a good global approximation to perform this reduction.

\medskip
For this, we denote $f(u)=u(1-u^2)$ and we consider the solution of the ode
$$
w''(s) +w(1-w^2)=0,\quad  s\in \R, \quad w(\pm \infty)=\pm 1
$$
which is given explicitly by
$$
w(s) \,=\, \tanh\left(\frac{s}{\sqrt{2}}\right),\quad s\in\R
$$
and has the asymptotic properties
\begin{equation}\label{Heteroclinic Asymptotics}
\begin{array}{cccc}
w(s) &=& 1 \,-\, 2 \, e^{-\sqrt{2}\,s} \,+\,
\OO\left(e^{-2\sqrt{2}|s|}\right),&s>1\\
w(s) &=& -1 \,+\, 2\,e^{\sqrt{2}\,s} \,+\,
\OO\left(e^{-2\sqrt{2}|s|}\right),& s<-1\\
w'(s)&=& 2\sqrt{2}\,e^{-\sqrt{2}\,|s|} \,+\,
\OO\left(e^{-2\sqrt{2}|s|}\right),& |s|>1
\end{array}
\end{equation}
where $w'=\frac{dw}{ds}$.

Next, denoting $\Omega_{\A}:=\A^{-1} \Omega$, and after an obvious rescaling we are lead to consider the problem
\begin{equation}\label{rescaledACEqn}
\Delta u + f(u)=0,\quad \hbox{in }\Omega_{\A}, \quad \frac{\pp u}{\pp n_{\A}}=0, \quad \hbox{on } \pp \Omega_{\A}
\end{equation}
where $n_{\A}$ stands for the inward unit normal vector to $\pp \Omega_{\A}$.

\medskip
In what follows for a function $U=U(x)$ and a subdomain $\Omega'\subset\Omega_{\A}$, we denote  by
$$
S(U)=\Delta U +f(U), \quad \hbox{in }\Omega'.
$$

\subsection{The inner approximation}\label{innerapprox}
Let us take $h\in W^{2,p}(\M)$ satisfying the apriori estimate.
\begin{equation}\label{APRIORIESTNODALSET}
\|D^2 h \|_{L^p(\M)} + \|\nabla h\|_{L^{\infty}(\M)} + \|h \|_{L^{\infty}(\M)}\leq \mathcal{K} \A
\end{equation}
where the constant $\mathcal{K}$ is going to be chosen large but independent of $\A>0$.

\medskip
Using the coordinates $X_{\A,h}$ we set as first local approximation
$$
u_0(x)=w(t), \quad x=X_{\A,h}(y,\theta,t)\in \A^{-1}(\Omega\cap \NN).
$$

When computing the error created by $u_0$ using \eqref{lap4}-\eqref{Rest1}, in $\A^{-1}(\Omega\cap\NN)$ we find that
$$
S(u_0)= -\A^2\{\Delta_{M}h + |A_{M}|^2h\}w'(t) - \A^2|A_{M}|^2tw'(t) + \A^2\pp_{i}h\pp_{j}h w''(t)
$$

$$
-\A^3(t+h)a_{ij}^1(\A y,\A (t+h))\left(\pp_{ij}h w'(t) - \pp_{i}h\pp_{j}h w''(t)\right) - \A^3b_{i}^1(\A y,\A (t+h))\pp_{i}h w'(t)
$$

\begin{equation}\label{ErrorU1}
- \A^4(t+h)b_{3}^1(\A y,\A(t+h))w'(t)
\end{equation}
where $|A_{M}|, h,\pp_{i}h,\pp_{ij}h$ are evaluated at $\A y$.

\medskip
Observe that if we take $h=0$, the size and behavior of the error in expression \eqref{ErrorU1} is given by
$$
-\A^2|A_{M_0}|^2tw'(t) + \A^4b_{3}^1(\A y,\A t)t^3w'(t).
$$

\medskip
As in \cite{9}, due to the presence of the $\OO(\A^2)$ term we need to improve this approximation. Hence we consider the function $\psi_1(t)$ solving the ode
\begin{equation}\label{g1Integral}
\pp_{tt}\psi_1(t) + F'(w(t))\psi_1(t)=tw'(t), \quad t\in \R.
\end{equation}

\medskip
Using variations of parameters formula and the fact that
$$
\int_{\R}t (w'(t))^2dt=0
$$
we obtain that $\psi_1(t)$ given by the formula
$$
\psi_1(t)=-w(t)\int_{0}^t w'(s)^{-2} \int_{s}^{\infty}\xi
w'(\xi)^2\,d\xi ds
$$
from where it follows at once that
$$
\|e^{\sigma|t|}\pp_{t}^{(j)}\psi_1\|_{L^{\infty}(\R)}\leq C_j,
\quad j\in \mathbb{N}, \quad 0<\sigma<\sqrt{2}.
$$

So, we consider as a second approximation in the region $\A^{-1}(\Omega\cap\NN)$ the function
\begin{equation}\label{SndApprox}
u_1(x) \,=\, w(t) \,+\, \phi_{1}(y,t)
\end{equation}
where in the coordinates $X_{\A,h}$
\begin{equation*}
\phi_{1}(y,t)\,=\,\A^2|A_{M}(\A
y)|^2\,\psi_1(t).
\end{equation*}

When computing the inner error of this new approximation we find that
$$
S(u_1)=\Delta\phi_1 + f(w(t))\phi_1 + S(u_0) + f(w(t) + \phi_1) -f(w(t)) - f'(w(t))\phi_1
$$

$$
= -\A^2\{\Delta_{M}h + |A_{M}|^2h\}w'(t) + \A^2\pp_{i}h\pp_{j}h w''(t)
-\A^3(t+h)a_{ij}^1(\A y,\A (t+h))\left(\pp_{ij}h w'(t) - \pp_{i}h\pp_{j}h w''(t)\right)
$$

$$
- \A^3b_{i}^1(\A y,\A (t+h))\pp_{i}h w'(t) - \A^4(t+h)b_{3}^1(\A y,\A(t+h))w'(t)
$$

$$
+\A^4\Delta_{M}(|A_{M}|^2)\psi_1(t) - \A^4\{\Delta_{M}h + |A_{M}|^2h\}|A_{M}|^2\psi(t) + \A^4|A_{M}|^4t\pp_{t}\psi_1(t)
$$

$$
- 2\A^4a_{ij}^0(\A y)\pp_{i}h\pp_{j}(|A_{M}|^2)\pp_{t}\psi_1(t) +\A^4a_{ij}^0(\A y)\pp_{i}h\pp_{j}h|A_{M}|^2\pp_{tt}\psi_1(t)
$$

\begin{equation}\label{ErrorU11}
N(\phi_1)+ \A^5R_{1,\A}(\A y,t,h,\nabla_{M}h,D^2_{M}h)
\end{equation}
where
$$
N(\phi_1)=f(w(t))\phi_1 + S(u_0) + f(w(t) + \phi_1) -f(w(t)) - f'(w(t))\phi_1\sim \OO(\A^4 e^{-\sigma|t|})
$$
and the differential operator has $C^1$ dependence of all of its variables with
$$
|\nabla R_{1,\A}|+ |R_{1,\A}|\leq C e^{-\sigma|t|}, \quad \hbox{for }0<\sigma<\sqrt{2}.
$$

\medskip
From the error \eqref{ErrorU11} we see that in $\A^{-1}(\Omega\cap\NN)$ the open neighborhood of $\M$
\begin{equation}\label{innerErrorBehavior}
|S(u_1) -\A^2\{\Delta_{M}h + |A_{M}|^2h\}w'(t)| \leq C\A^4e^{-\sigma |t|}
\end{equation}

\medskip
\subsection{Boundary correction}\label{boundarycorrect}

It is clear that our approximation $u_1$ can be defined in the set $\A^{-1}(\Omega\cap \NN)$, but $u_1$ does not satisfy in general the boundary condition. In this regard we need to make a further improvement of the approximation $u_1$ by adding boundary correction terms.

\medskip
Let us consider a cut-off function $\beta$ such that
$$
\beta(\rho) =\left\{
\begin{array}{ccc}
1,& 0\leq \rho<\frac{\D}{2}\\
0,& \rho>\D
\end{array}
\right.
$$

\medskip
For the $k-th$ end of $\A^{-1}M$, $M_{k,\A}$ we consider a cut-off function $\beta_{\A}=\beta_{k,\A}(x)= \beta(\A s)$ for $x=\tilde{X}_{\A,h}(s,\theta,t)$.

\medskip
Near the boundary we consider a new approximation of the form
$$
u_2(x)=u_1(x) + \sum_{k=1}^m\left(\beta_{\A,k}\phi_{2,k}(x) + \beta_{\A,k}\phi_{3,k}(x)\right)
$$
where $\phi_{2,k}$, $\phi_{3,k}(x)$ are going to be chosen of order $\OO(\A)$ and $\OO(\A^2)$ respectively. To see how to choose $\phi_{2,k},\phi_{3,k}$, we proceed first by computing the error of the boundary condition created by the approximation $u_2$ using the coordinates $\tilde{X}_{\A,h}$ and expression \eqref{bdrcondition} for a fixed end $M_k$. In what follows we ommit the explicit dependence of $k$ but we remark that the developments in this part hold true regardless of the end we are working with, since they all have the same geometrical structure and the supports of the cutt-off functions $\beta_{\A,k}$ within te region $\NN_{\A,h}$ close to every end $M_{k,\A}$ are far away from each other.

\medskip
Recalling that we have assumed $h=\OO_{W^{2,p}(\M)}(\A)$ and splitting the boundary error in powers of $\A$ we find
$$
\tilde{B}(u_2)= -\pp_{s}\phi_2 + \A I(\A \theta)tw'(t) - \pp_{s}\phi_{3}
$$

$$
+ \A\left(\pp_{\rho}h_1 +I(\A \theta)h_1\right)w'(t) - \A^2 I(\A\theta)m_1(\A \theta)t^2w'(t) +\A I(\A \theta)t \pp_{t}\phi_2 - 2\A m_1(\A \theta)t\pp_{s}\phi_2
$$

$$
- \A^3\pp_{\rho}(|A_{M}|^2)\psi_1(t) + \A^3 I(\A \theta)|A_{M}|^2\pp_{t}\psi_1(t)
$$

$$
+\A\left\{\pp_{\rho}h + I(\A \theta)h\right\}\pp_{t}\phi_2 -2m_1(\A \theta)h\pp_{s}\phi_2 - \A^2(t+h)^2\tilde{d}_1(\A \theta)\pp_{s}\phi_2
$$

\begin{equation}\label{bdrerror0}
-2\A^2 I(\A \theta)m_1(\A \theta)hw'(t) - \A^2I(\A \theta)m_1(\A \theta)t^2\pp_{t}\phi_2 +  \A^4\tilde{B}_{0,\A}
\end{equation}
where the term $\tilde{B}_{0,\A}$ satisfies that
$$
|\nabla \tilde{B}_{0,\A}|+|\tilde{B}_{0,\A}|\leq Ce^{-\sigma|t|}
$$
Our goal is to get a boundary error of order $\OO(\A^3e^{-\sigma}|t|)$.  In order to do so, we first choose $\phi_{2}$ solving the equation
$$
\pp_{tt} \phi_{2} + \Delta_{M_{\A}} \phi_{2} +f'(w(t))\phi_{2} =0, \quad \hbox{in }M_{\A}\times \R
$$
$$
\pp_{s} \phi_{2} =\A I(\A \theta)tw'(t)
$$
from where we obtain that $\phi_{2}(\cdot,t)$ is odd in $t$ and from proposition \ref{ProjectedProblemInvertTheory} it follows that in the norms \eqref{error norm}-\eqref{norm}
$$
\|D^2 \phi_2\|_{p,\sigma} + \|e^{\sigma|t|}\nabla \phi_{2}\|_{L^{\infty}(M_{\A}\times\R)} + \|e^{\sigma|t|}\phi_{2}\|_{L^{\infty}(M_{\A}\times\R)} \leq C\A.
$$

\medskip
To choose $\phi_{3}$, we make the decomposition
$$
t^2w'(t)= c_1\,w'(t) + g_1(t), \quad c_1=\|w'\|^{-2}_{L^2(\R)}\int_{\R}t^2(w'(t))^2dt
$$
$$
t\pp_{s}\phi_{21}(\cdot,t)= c_2(\cdot)w'(t) + g_2(\cdot,t), \quad \int_{\R}g_2(\cdot,t)w'(t)dt=0
$$
and we write the second line in \eqref{bdrerror0} as
$$
+\A\left(\pp_{\rho}h +I(\A \theta)h\right)w'(t) - \A^2 c_1 I(\A\theta)m_1(\A \theta)w'(t)
- 2\A m_1(\A \theta)c_2(\theta)w'(t)
$$

$$
- \A^2 I(\A\theta)m_1(\A \theta)g_1(t) - 2\A m_1(\A \theta)g_2(\theta,t) +\A I(\A \theta)t \pp_{t}\phi_2.
$$

\medskip
Hence to choose $\phi_3$ we first ask $\phi_3$ to satisfy the boundary condition on $\pp M_{\A}\times \R$
\begin{equation}\label{bdrcondPHI3}
\pp_{s}\phi_3(0,\theta,t)=- \A^2 I(\A\theta)m_1(\A \theta)g_1(t) - 2\A m_1(\A \theta)g_2(\theta,t) +\A I(\A \theta)t \pp_{t}\phi_2.
\end{equation}

Next we compute the error of the approximation $u_2$ near the boundary using expression \eqref{lapbdr}. Setting
$$
S_{out}(u_2) = S(u_1) + \Delta \phi_2 +f'(w(t))\phi_2 + \Delta \phi_3 + f'(w(t))\phi_3 + \frac{1}{2}f''(w(t))(\phi_2+ \phi_3)^2
$$

$$
[f'(u_1) -f'(w(t))](\phi_2 + \phi_3) + \frac{1}{2}[f''(u_1) -f''(w(t))](\phi_2 + \phi_3)^2
$$

$$
[f(u_1 + \phi_2 + \phi_3) -f(u_1) - f'(u_1)(\phi_2 + \phi_3) - \frac{1}{2}f''(u_1)(\phi_2 + \phi_3)^2].
$$

$$
\
$$
So that we can write this error explicitely in coordinates $(s,\theta,t)$ as

$$
S_{out}(u_2) = S(u_1)+ \pp_{tt} \phi_3 + \Delta_{M_{\A}}\phi_3 + f'(w(t))\phi_3 - \A^2|A_{M}|^2t\pp_{t}\phi_2 + \frac{1}{2}f''(w(t))\phi_2^2 - 2\A \frac{I(\A \theta)}{l_1(\A \theta)l_2(\A \theta)}t\pp_{st}\phi_{2}
$$

$$
-\A^2\left\{\Delta_{M}h + |A_{M}|^2h\right\}\pp_{t}\phi_{2} - 2\A \frac{I(\A \theta)}{l_1(\A \theta)l_2(\A \theta)}h\pp_{st}\phi_{2} +2\A^2\frac{I(\A \theta)}{l_1(\A \theta)l_2(\A \theta)}(t+h)\pp_{\rho}h\pp_{tt}\phi_{2}
$$

$$
-\A l_1^{-2}(\A \theta)\pp_{\vv\vv}h^2\pp_{\theta t}\phi_2
-\A l_2^{-2}(\A \theta)\pp_{\rho\rho}h\pp_{s t}\phi_2 + \A^2 l_1^{-2}(\A \theta)|\pp_{\vv}h|^2\pp_{tt}\phi_2
$$

$$
+ \A^2 l_2^{-2}(\A \theta)|\pp_{\rho\rho}h|^2\pp_{t t}\phi_2-2\A^3 sl_2^{-4}(\A \theta) A(\A \theta)|\pp_{\rho}h|^2
\pp_{tt}\phi_2
$$

$$
+ \A^2\tilde{a}_1(\A s,\A \theta, \A(t+h))\,\left\{\pp_{\theta t}\phi_2 - \A \pp_{\vv}h \phi_{tt}\phi_2\right\}  + \A^2\tilde{a}_2(\A s,\A \theta, \A(t+h))\,\left\{\pp_{s t}\phi_2 - \A \pp_{\rho}h \phi_{tt}\phi_2\right\}
$$

$$
+ \A^2\tilde{b}_1(\A s,\A \theta, \A(t+h))\,\left\{\pp_{\theta }\phi_2 - \A \pp_{\vv}h \phi_{t}\phi_2\right\}  + \A^2\tilde{b}_2(\A s,\A \theta, \A(t+h))\,\left\{\pp_{s}\phi_2 - \A \pp_{\rho}h \phi_{t}\phi_2\right\}
$$

\begin{equation}\label{BDRERROREXPLICIT}
+ f'(w(t))(\phi_2\cdot\phi_3 + \frac{1}{2}\phi_3^2)+ [f'(u_1)-f'(w(t))](\phi_2 + \phi_3)+ +\A^3(t+h)\tilde{b}_3(\A s,\A \theta,\A(t+h))\pp_{t}\phi_2 + \tilde{R}_{2,\A}
\end{equation}
where $\tilde{R}_{2,\A}=\tilde{R}_{2,\A}(\A s, \A\theta,t,h,\nabla_{\M}h,D^2_{\M}h)= \OO(\A^4)$ and
$$
|D\tilde{R}_{2,\A}|+|\tilde{R}_{2,\A}|\leq C\A^3.
$$
we notice that
$$
S(u_2)=(1-\beta_{\A})S(u_1) + \beta_{\A}S_{out}(u_2) + 2\nabla\beta_{\A}\cdot\nabla \phi_2 + 2 \nabla \beta_{\A}\cdot\nabla \phi_3 + (\phi_2 + \phi_3)\Delta \beta_{\A}
$$

$$
f(u_{1}+ \beta_{\A}(\phi_2 + \phi_3)) - f(u_1) - \beta_{\A}f(u_1 + \phi_2 + \phi_3) - (1-\beta_{\A})f(u_1)
$$
and observe that
\begin{equation}\label{bdrerrortermgluing}
\nabla \beta_{\A}\cdot \nabla \phi_2 = \A \pp_{s}\beta(\A s)\pp_{s}\phi_2 = \OO(\A^2e^{-\sigma|t|})
\end{equation}
and
$$
f(u_{1}+ \beta_{\A}(\phi_2 + \phi_3)) - f(u_1) - \beta_{\A}f(u_1 + \phi_2 + \phi_3) - (1-\beta_{\A})f(u_1) = \OO(\A^4e^{-\sigma|t|})
$$

Hence in order to improve the approximation we need to get rid of the terms in the first line of \eqref{BDRERROREXPLICIT} and the term in \eqref{bdrerrortermgluing} so that we choose $\phi_3$ solving the linear problem

$$
\pp_{tt} \phi_3 + \Delta_{M_{\A}}\phi_3 + f'(w(t))\phi_3 = \A^2|A_{M}|^2t\pp_{t}\phi_2 - \frac{1}{2}f''(w(t))\phi_2^2 + 2\A \frac{I(\A \theta)}{l_1(\A \theta)l_2(\A \theta)}t\pp_{st}\phi_{2}- \A\pp_{s}\beta(\A s)\pp_{s}\phi_2, \quad \hbox{in }M_{\A} \times \R
$$
\begin{equation*}
\pp_{s}\phi_3(0,\theta,t)=- \A^2 I(\A\theta)m_1(\A \theta)g_1(t) - 2\A m_1(\A \theta)g_2(\theta,t) +\A I(\A \theta)t \pp_{t}\phi_2.
\end{equation*}
$$
\int_{\R}\phi_3(\cdot,t)w'(t)dt=0, \quad \hbox{in }M_{\A}.
$$

From this, $\phi_3$ satisfies
$$
\|D^2 \phi_3\|_{p,\sigma} + \|e^{\sigma|t|}\nabla \phi_{3}\|_{L^{\infty}(M_{\A}\times\R)} + \|e^{\sigma|t|}\phi_{3}\|_{L^{\infty}(M_{\A}\times\R)} \leq C\A^2.
$$

From expression \eqref{BDRERROREXPLICIT} we directly check that
\begin{equation}\label{behaviorBDRERRORLAP}
S(u_2)=S(u_1) + E_{0,\A} + E_{1,\A}
\end{equation}
where $E_{i,\A}=E_{i,\A}(\A s, \A\theta,t,h,\nabla_{\M}h,D^2_{\M}h)= \OO(\A^{2+i})$ and
$$
|D E_{i,\A}|+|E_{i,\A}|\leq C\A^{3+i}, \quad i=0,1.
$$
and from \eqref{bdrerror0} and \eqref{bdrcondPHI3} the boundary error takes the form
\begin{equation}\label{behaviorBDRERRORNUEMANN}
\tilde{B}(u_2)=\A\left(\pp_{\rho}h_1 +I(\A \theta)h_1\right)w'(t) - \A^2 c_1 I(\A\theta)m_1(\A \theta)w'(t)
- 2\A m_1(\A \theta)c_2(\theta)w'(t)
\end{equation}

$$
\tilde{B}_{1,\A}+ \tilde{B}_{2,\A}
$$
where $\|c_2\|_{L^{\infty}(\pp M_{\A})}\leq C \A$ and
$\tilde{B}_{i,\A}=E_{i,\A}(\A s, \A\theta,t,h,\nabla_{\M}h,D^2_{\M}h)= \OO(\A^2+i)$ and
$$
|D \tilde{B}_{i,\A}|+|\tilde{B}_{i,\A}|\leq C\A^{2+i}, \quad i=1,2.
$$

\medskip
To get the right size of the boundary error we impose on $h$ the boundary condition
$$
\pp_{\rho}h +I(\vv)h = \A c_1 I(\vv)m_1(\vv)
+2m_1(\vv)c_2\left(\frac{\vv}{\A}\right)
$$
where we remark that $\|c_2\|_{L^{\infty}(M_{\A})}\leq C\A$ so that
$$
\|\A c_1 Im_1
+2m_1c_2\left(\frac{\cdot}{\A}\right)\|_{L^{\infty}(\pp \M)}\leq C\A.
$$

\medskip
\subsection{The global approximation}\label{globalapprox} Observe that so far, our
approximation is defined only in the open set $\A^{-1}(\Omega\cap \NN)$. In order to define a global approximation in $\Omega_{\A}$ let us define the function
\begin{equation}\label{Interpolation}
\mathbb{H}(x):=\left\{
\begin{array}{ccc}
+1,& x \in \A^{-1}S^{+}\\
-1,&x \in \A^{-1}S^{-}.
\end{array}
\right.
\end{equation}
which is clearly an exact solution of the equation wherever it is smooth.

\medskip
The idea to get a global approximation is to consider the approximation $u_2$ well inside $\NN_{\A,h}$, while outside $\NN_{\A,h}$ we interpolate
with the function $\mathbb{H}(x)$. In order to make this precise, let us take a non-negative function $\beta$ in $C^{\infty}(\R)$ such that
$$
\beta(s)=\left \{
\begin{array}{ccc}
1,& |s| \leq 1
0,&  |s| \geq 2
\end{array}
\right.
$$
and consider the following cut-off function in $\NN_{\A,h}$ given by
$$
\beta_{\eta}(x)= \beta (|t + h(\A y)| - \frac{\eta}{\A} -
\D \ln(r( \A y))+ 2), \quad x= X_{\A,h}(y,\theta, t) \in \NN_{\A,h}.
$$

\medskip
With the aid of this, we set up as approximation in $\Omega_{\A}$ the function
\begin{equation}\label{global-approximation}
U(x) = \beta_{\eta}(x)u_2(x) + (1 - \beta_{\eta}(x)\mathbb{H}(x), \quad x \in \Omega_{\A}.
\end{equation}
and we compute the new error created by this approximation as follows
$$
S(U) \,=\, \Delta U \,+\, f(U)\,=\,\beta_2S(u_2) + E
$$
where
$$
E = f(\beta_{\eta}U) - \beta_{\eta}f(U) +  2 \nabla
\beta_{\eta}\cdot \nabla u + u \Delta \beta_{\eta}.
$$
Using that $z =|t + h(\A y)|$ we see that the derivatives of
$\beta_1$ do not depend on the derivatives of $h$. On the other hand, due to the choice of $\beta_{\eta}$ and the explicit form of
$E$, the error created only takes into account the values of
$\beta_{\eta}$ in the set
$$
x=X_{\A,h}(y,\theta,t) \in \NN_{\A,h}, \quad |t + h(\A y)| \geq
\frac{\eta}{\A} + 4\ln(r(\A y)) - 2,
$$
so we get the following estimate for the error $E$
$$
|E|\leq Ce^{-\frac{\eta}{\A}}.
$$

\section{ The proof of Theorem \ref{theo1}}\label{PROOFTHEO1}
The proof of Theorem \ref{theo1} is fairly technical. So,  to keep the presentation as clear as possible, we sketch the steps of the proof and in the next sections we give the detailed proofs of
the lemmas and propositions mentioned here.

\medskip
First, we introduce the norms we consider to set up an appropriate functional analytic scheme for the proof of Theorem \ref{theo1}.
For $\A >0$, $1 < p \leq \infty$ and a function $f(x)$,
defined in $\Omega_{\A}$, we set

\begin{equation}\label{gluing norm}
\|f\|_{p,\ts}:= \sup_{x \in \R^3}\|f\|_{L^p(B_1(x))}.
\end{equation}

We also consider for functions $g=g(y,t)$,
$\phi=\phi(y,t)$, defined in the whole $M_{\A}\times \R$, the norms
\begin{equation}\label{error norm}
\|g\|_{p,\sigma}:=\sup_{(y,t)\in M_{\A}\times \R}e^{\sigma|t|}\|g\|_{L^p(B_1(y,t);dA_{\A})}
\end{equation}

\begin{equation}\label{norm}
\|\phi\|_{2,p,\sigma} := \|D^2\phi\|_{p,\sigma} +
\|D\phi\|_{\infty,\sigma}+\|\phi\|_{\infty,\sigma}.
\end{equation}

where $dA_{\A} := dy_{g_{M_{\A}}}dt$. Of course, in the case $p= +\infty$, we have that $L^{\infty}(B_1(y,t)),dA_{\A})= L^{\infty}(B_1(y,t))$.

\medskip
While for a function $G$ defined in $\pp M_{\A}\times \R$ we consider the norm
$$
\|G\|_{p,\sigma}:= \|e^{\sigma |t|}G\|_{L^{p}(\pp M_{\A}\times \R)}
$$
\medskip
And we mentioned in the previous section, we set the norm for the parameter $h$ as
\begin{equation}\label{normnodalset}
\|h\|_{*}= \|D^2 h\|_{L^{p}(\M)} + \|\nabla h\|_{L^{\infty}(\M)} +
\|h\|_{L^{\infty}(\M)}
\end{equation}

\medskip
We look for a solution to equation
\eqref{SPAC} of the form
$$
u_{\A}(x) = U(x) +\varphi(x)
$$
where $U(x)$ is the global approximation defined in
\eqref{global-approximation} and $\varphi$ is going to be chosen
small in some appropriate sense. Thus, we need to solve the problem
$$
\Delta \varphi + f'(U)\varphi + S(U) + N(\varphi) = 0
$$
or equivalently
\begin{equation}
\begin{array}{lll}
\Delta \varphi + f(U)\varphi &=& - S(W) - N(\varphi)\\
\\
&=& -\beta_{\eta}S(u_2) - E - N(\varphi)
\end{array}
\label{LinearEquationforW}
\end{equation}
where
$$
N(\varphi) = f(U + \varphi) - f(U) - f'(U)\varphi.
$$

\medskip
\subsection{The Gluing Procedure.}\label{gluingproc} In order to solve problem
\eqref{LinearEquationforW}, we consider again the cut-off function
$\beta$, from previous section, and we define for every $n\in
\mathbb{N}$, the cut-off function
\begin{equation}\label{cut-off zeta}
\zeta_{n}(x) := \left\{
\begin{array}{ccc}
 \beta(|t + h(\A y)| - \frac{\eta}{\A} + n) &\quad
\hbox{ if } \quad x=X_{\A,h}(y_1,y_2,t)\in \NN_{\A,h} &\\
0 &\quad \hbox{ if } \quad x\in \NN_{\A,h}.&
\end{array}
\right.
\end{equation}

\medskip
We look for a solution to \eqref{LinearEquationforW} $\varphi(x)$ with the particular form
$$
\varphi(x) = \zeta_2(x)\phi(y,t) + \psi(x)
$$
where $\phi(y,t)$ is defined for every $(y,t) \in M_{\A}\times \R$ and
$\psi(x)$ is defined in the whole $\Omega_{\A}$. So, we find from equation
\eqref{LinearEquationforW} that
$$
\zeta_2\left[ \Delta_{\NN_{\A,h}} \phi +f'(U)\phi +
\zeta_1 U\psi + S(U) + \zeta_2N(\phi + \psi) \right]
$$
$$
+\, \Delta \psi - [2- (1 - \zeta_2)[f'(U)+2]]\psi + (1-\zeta_2)
S(U)
$$
$$
+\, 2 \nabla \zeta_2\cdot \nabla_{\NN_{\A,h}}\phi + \phi \Delta
\zeta_2 + (1- \zeta_2)N[\zeta_2\phi + \psi] =0.
$$

\medskip
Hence, we will have constructed a solution to the problem
\eqref{LinearEquationforW}, if solve the system
\begin{equation}
\Delta_{\NN_{\A,h}} \phi +f'(U)\phi + \zeta_2 U\psi +
S(U) + \zeta_2N(\phi + \psi) =0, \quad \hbox{in } |t+h(\A y)| <
\frac{\eta}{\A} -1 \label{System1}
\end{equation}

$$
\Delta \psi - [2- (1 - \zeta_2)[f'(U)+2]]\psi + (1-\zeta_2) S(U)
$$
\begin{equation}
+\, 2 \nabla \zeta_2\cdot \nabla_{\NN_{\A,h}}\phi \,+\, \phi \Delta
\zeta_2\,+\, (1- \zeta_2)N[\zeta_2\phi + \psi] =0, \quad \hbox{in }
\Omega_{\A}. \label{System2}
\end{equation}

\medskip
As for the boundary conditions we compute
\begin{equation*}
\beta_{\eta}\frac{\pp u_2}{\pp n_{\A}} +  \zeta_{2}\frac{\pp \phi}{\pp n_{\A}} +(u_2 -  \mathbb{H}(x))
\frac{\pp\beta_{\eta}}{\pp n_{\A}} + \phi \frac{\pp \zeta_2}{\pp n_{\A}} + \frac{\pp \psi}{\pp n_{\A}}=0.
\end{equation*}

\medskip
Therefore, as we proceeded above, we reduce the boundary condition to the boundary condition system
\begin{equation}\label{BDRCONDPHI1}
\beta_{\eta}\frac{\pp u_2}{\pp n_{\A}} +  \zeta_{2}\frac{\pp \phi}{\pp n_{\A}}=0
\end{equation}
\begin{equation}\label{BDRCONDPSI1}
\frac{\pp \psi}{\pp n_{\A}}+ (u_2 -  \mathbb{H}(x))
\frac{\pp\beta_{\eta}}{\pp n_{\A}} + \phi \frac{\pp \zeta_2}{\pp n_{\A}}=0.
\end{equation}

 \medskip

Next, we extend \eqref{System1} to a qualitative similar equation in $M_{\A}\times\R$. Let us set
$$
R(\phi) := \zeta_4[\Delta_{\NN_{\A,h}} - \pp_{tt} - \Delta_{M_{\A}}].
$$

Observe that $R(\phi)$ is
understood to be zero for $|t + h(\A y)| > \frac{\eta}{\A} + 2$ and so we consider the equation
$$
\pp_{tt} \phi + \Delta_{M_{\A}} \phi +f'(w(t))\phi =\, -\,
\widetilde{S}(u_2) - R(\phi)
$$

\begin{equation}
\,-\, (f'(u_2) - f'(w(t)))\phi \,-\, \zeta_2 u_2\psi \,-\,
\zeta_2N(\phi + \psi), \quad \hbox{in } M_{\A}\times \R.
\label{System1'}
\end{equation}
where from expression \eqref{BDRERROREXPLICIT} and ommiting the depedence on $k$, we have on every the $k-th$ end $M_{k,\A}$ explicitely
$$
\tilde{S}(u_2) = \tilde{S}(u_1) -\A^2\left\{\Delta_{M}h + |A_{M}|^2h\right\}\pp_{t}\phi_{2} - 2\A \frac{I(\A \theta)}{l_1(\A \theta)l_2(\A \theta)}h\pp_{st}\phi_{2} +2\A^2\frac{I(\A \theta)}{l_1(\A \theta)l_2(\A \theta)}(t+h)\pp_{\rho}h\pp_{tt}\phi_{2}
$$

$$
+2\A\pp_{\rho}\beta(\A s)\pp_{s}\phi_3 + \A^2\pp_{\rho\rho}\beta(\A s)(\phi_2+ \phi_3)-\A l_1^{-2}(\A \theta)\pp_{\vv\vv}h^2\pp_{\theta t}\phi_2
-\A l_2^{-2}(\A \theta)\pp_{\rho\rho}h\pp_{s t}\phi_2 + \A^2 l_1^{-2}(\A \theta)|\pp_{\vv}h|^2\pp_{tt}\phi_2
$$

$$
+ \A^2 l_2^{-2}(\A \theta)|\pp_{\rho\rho}h|^2\pp_{t t}\phi_2-2\A^3 sl_2^{-4}(\A \theta) A(\A \theta)|\pp_{\rho}h|^2
\pp_{tt}\phi_2
+ f'(w(t))(\phi_2\cdot\phi_3 + \frac{1}{2}\phi_3^2)+ [f'(u_1)-f'(w(t))](\phi_2 + \phi_3)
$$

$$
+ \A^2\zeta_{4}\tilde{a}_1(\A s,\A \theta, \A(t+h))\,\left\{\pp_{\theta t}\phi_2 - \A \pp_{\vv}h \phi_{tt}\phi_2\right\}  + \A^2\zeta_{4}\tilde{a}_2(\A s,\A \theta, \A(t+h))\,\left\{\pp_{s t}\phi_2 - \A \pp_{\rho}h \phi_{tt}\phi_2\right\}
$$

$$
+ \A^2\zeta_{4}\tilde{b}_1(\A s,\A \theta, \A(t+h))\,\left\{\pp_{\theta }\phi_2 - \A \pp_{\vv}h \phi_{t}\phi_2\right\}  + \A^2\zeta_{4}\tilde{b}_2(\A s,\A \theta, \A(t+h))\,\left\{\pp_{s}\phi_2 - \A \pp_{\rho}h \phi_{t}\phi_2\right\}
$$

\begin{equation}\label{BDRERROREXPLICIT1} +\A^3\zeta_{4}(t+h)\tilde{b}_3(\A s,\A \theta,\A(t+h))\pp_{t}\phi_2 +\zeta_{4}\tilde{R}_{2,\A}
\end{equation}
and from expression \eqref{ErrorU11} we write
$$
\tilde{S}(u_1) = -\A^2\{\Delta_{M}h + |A_{M}|^2h\}w'(t)
+ \A^2\pp_{i}h\pp_{j}h w''(t)
$$

$$
\A^4\Delta_{M}(|A_{M}|^2)\psi_1(t) - \A^4\{\Delta_{M}h + |A_{M}|^2h\}|A_{M}|^2\psi(t) + \A^4|A_{M}|^4t\pp_{t}\psi_1(t)
$$

$$
- 2\A^4a_{ij}^0(\A y)\pp_{i}h\pp_{j}(|A_{M}|^2)\pp_{t}\psi_1(t) +\A^4a_{ij}^0(\A y)\pp_{i}h\pp_{j}h|A_{M}|^2\pp_{tt}\psi_1(t)
$$

$$
-\A^3\zeta_4(t+h)a_{ij}^1(\A y,\A (t+h))\left(\pp_{ij}h w'(t) - \pp_{i}h\pp_{j}h w''(t)\right)
$$

$$- \A^3\zeta_4b_{i}^1(\A y,\A (t+h))\pp_{i}h w'(t) - \A^4(t+h)\zeta_4b_{3}^1(\A y,\A(t+h))w'(t)
$$

\begin{equation}\label{ErrorU11TILDE}
N(\phi_1)+ \A^5\zeta_4 R_{1,\A}(\A y,t,h,\nabla_{M}h,D^2_{M}h)
\end{equation}

\medskip

Observe that $\tilde{S}(u_1)$ and $\tilde{S}(u_2)$ coincide with $S(u_1)$, $S(u_2)$ but the parts that are not defined for all $t \in \R$ are cut-off outside the support of $\zeta_2$.

\medskip
We proceed in the same fashion for the boundary condition
we and writing
$$
\mathcal{B} = \zeta_4\left[\sqrt{\tilde{g}^{11}}\frac{\pp}{\pp n_{\A}}- \pp_{s}\right]
$$
it suffices to consider $\phi$ satisfying
$$
\pp_{\tau_{\A}}\phi + \mathcal{B}(\phi) = \tilde{B}(u_2)
$$
where $\tau_{\A}=s$  is the tangent inward direction to $\pp M_{\A}$ and in expression \eqref{bdrerror0} we cut-off the parts that are not defined for every $t$. We write also for further purposes
$$
\tilde{B}(u_2)= - \A^3\pp_{\rho}(|A_{M}|^2)\psi_1(t) + \A^3 I(\A \theta)|A_{M}|^2\pp_{t}\psi_1(t)
$$

$$
+\A\left\{\pp_{\rho}h + I(\A \theta)h\right\}\pp_{t}\phi_2 -2m_1(\A \theta)h\pp_{s}\phi_2 - \A^2(t+h)^2\zeta_4\tilde{d}_1(\A \theta)\pp_{s}\phi_2
$$

\begin{equation}\label{bdrerror1}
-2\A^2 I(\A \theta)m_1(\A \theta)hw'(t) - \A^2I(\A \theta)m_1(\A \theta)t^2\pp_{t}\phi_2 +  \A^4\zeta_4\tilde{B}_{0,\A}.
\end{equation}

\medskip
Observe that again we have ommitted the depedence on the end $M_{k,\A}$ for notational convenience.

\medskip
We solve first \eqref{System2}-\eqref{BDRCONDPSI1}, using the fact that the potential $2- (1 -
\zeta_2)[f'(U) +2]$ is uniformly positive, so that the linear operator
behaves like $\Delta - 2$. A solution $\psi = \Psi(\phi)$ is
then found from the contraction mapping principle. We collect this
discussion in the following lemma, that will be proven in detail in
section 5.

\begin{prop}\label{Solving System2}
Assume and $3<p \leq \infty$ and let $h$ be as in \eqref{APRIORIESTNODALSET}. Then, for every $\A
>0$ sufficiently small and every $\phi$ such that $\|\phi\|_{2,p,\sigma}
\leq 1$, equation \eqref{System2} has a unique solution $\psi = \Psi(\phi)$. Even more the operator $\Psi(\phi)$ turns out to be
lipschitz in $\phi$. More precisely, $\Psi(\phi)$ satisfies that
\begin{equation}\label{size of Psi}
\|\psi\|_{X} := \|D^2\psi\|_{p,\ts} + \|D\psi\|_{\infty} +
\|\psi\|_{\infty} \leq Ce^{-\frac{c\eta}{\A}}
\end{equation}
and
\begin{equation} \label{Lipschitz psi}
\|\Psi(\phi_1) - \Psi(\phi_2)\|_{X} \leq
Ce^{-\frac{c\eta}{\A}}\|\phi_1 - \phi_2\|_{2,p,\sigma}.
\end{equation}
\end{prop}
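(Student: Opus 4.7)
The plan is to view \eqref{System2}--\eqref{BDRCONDPSI1} as a perturbed linear Neumann problem with uniformly positive zero-order coefficient, and solve it by a contraction-mapping argument. Write the principal operator as $L\psi := \Delta\psi - V(x)\,\psi$ with potential
$$V(x) := 2 - (1-\zeta_2(x))\bigl[f'(U(x))+2\bigr].$$
The first point to verify is that $V$ is bounded below by a positive constant uniformly in $\A$. Where $\zeta_2 = 1$ one has $V \equiv 2$. Where $\zeta_2 = 0$ the function $U$ equals $\mathbb{H}=\pm 1$ or differs from $\pm 1$ by an exponentially small amount, so $f'(U)+2 = 1-3U^2+2 = O(|U^2-1|)$ is small and $V = -f'(U) = 2 + o(1)$. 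In the intermediate annular region the heteroclinic asymptotics \eqref{Heteroclinic Asymptotics} and the construction of $u_2$ give $|u_2^2 - 1| \leq C e^{-c\eta/\A}$, so $V \geq 1$ for $\A$ sufficiently small.

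Having coercivity, I next establish the auxiliary linear theory: for any $g$ with $\|g\|_{p,\ts}<\infty$ and any boundary datum $G$ on $\pp\Omega_\A$ with analogous local weighted $L^p$ norm $\|G\|_{p,\ts,\pp}$ finite, the problem
$$L\psi = g \ \text{in }\Omega_\A, \qquad \frac{\pp\psi}{\pp n_\A} = G \ \text{on }\pp\Omega_\A$$
is uniquely solvable in $H^1(\Omega_\A)$ by Lax--Milgram (the bilinear form $\int \nabla\psi\cdot\nabla\varphi + V\,\psi\varphi$ is coercive uniformly in $\A$). A standard barrier comparison with the constant-coefficient model $-\Delta + 1$ upgrades this to the pointwise bound $\|\psi\|_{L^\infty(\Omega_\A)} \leq C(\|g\|_{p,\ts} + \|G\|_{p,\ts,\pp})$. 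Interior and boundary Calder\'on--Zygmund $W^{2,p}$ estimates on unit balls---valid because $\pp\Omega_\A$ has bounded geometry at the unit scale---then control $\|D^2\psi\|_{p,\ts}$ by the same quantity, and Morrey embedding (using $p>3$) gives $\|D\psi\|_\infty$. Altogether,
$$\|\psi\|_X \leq C\bigl(\|g\|_{p,\ts} + \|G\|_{p,\ts,\pp}\bigr),$$
with $C$ independent of $\A$.

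Next, I would estimate the sources in \eqref{System2}--\eqref{BDRCONDPSI1}. Each is supported either where $\zeta_2$ or $\beta_\eta$ vary, or in $\Omega_\A \setminus \NN_{\A,h}$; both regions are contained in $\{|t+h(\A y)|\gtrsim \eta/\A\}$. Using the explicit expressions \eqref{ErrorU11} and \eqref{BDRERROREXPLICIT} for $S(u_2)$, the bound $|E| \leq C e^{-\eta/\A}$ from Section \ref{APPROXSOL}, and the decay \eqref{Heteroclinic Asymptotics}, one sees that $\|(1-\zeta_2)S(U)\|_{p,\ts}\leq C e^{-c\eta/\A}$. The cut-off commutator $2\nabla\zeta_2\cdot\nabla\phi + \phi\,\Delta\zeta_2$ is supported where $|t|\gtrsim \eta/\A - O(1)$, hence with $\|\phi\|_{2,p,\sigma}\leq 1$ is again $O(e^{-\sigma\eta/\A})$ in $\|\cdot\|_{p,\ts}$. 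The boundary contributions $(u_2 - \mathbb{H})\,\pp\beta_\eta/\pp n_\A$ and $\phi\,\pp\zeta_2/\pp n_\A$ obey an analogous estimate for the same reason.

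Finally I would run the contraction. Fix $\phi$ with $\|\phi\|_{2,p,\sigma}\leq 1$ and define $\mathcal{T}(\Psi)$ to be the unique solution of the linear Neumann problem whose right-hand side and boundary data are those of \eqref{System2}--\eqref{BDRCONDPSI1} with $\psi$ replaced by $\Psi$ inside the term $(1-\zeta_2)N[\zeta_2\phi + \psi]$ and inside the boundary datum. The quadratic bound $|N(v)|\leq C v^2$ combined with the source estimates of the previous paragraph yields $\|\mathcal{T}(\Psi)\|_X \leq C_0 e^{-c\eta/\A}$ for every $\Psi$ in the ball $\|\Psi\|_X \leq 2C_0 e^{-c\eta/\A}$, and
$$\|\mathcal{T}(\Psi_1) - \mathcal{T}(\Psi_2)\|_X \leq \tfrac{1}{2}\,\|\Psi_1 - \Psi_2\|_X$$
for $\A$ small. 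Banach's theorem produces the required fixed point $\psi = \Psi(\phi)$ satisfying \eqref{size of Psi}, and \eqref{Lipschitz psi} follows by subtracting the equations for $\Psi(\phi_1)$ and $\Psi(\phi_2)$ and using the same linear estimate. The main obstacle is the uniformity in $\A \to 0$ of the linear $W^{2,p}$ and $L^\infty$ bounds on the dilated domain $\Omega_\A$; this hinges on the uniform positivity of $V$ established in the first paragraph together with the fact that $\pp\Omega_\A$ has bounded geometry at unit scale.
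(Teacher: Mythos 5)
Your proposal is correct and follows essentially the same route as the paper: the paper observes the uniform positivity $0<a\le Q_\A \le b$ of the potential, invokes a linear solvability lemma with the estimate $\|\psi\|_X \leq C(\|g\|_{L^p(\Omega_\A)}+\|G\|_{L^p(\pp\Omega_\A)})$ (whose proof is deferred to the reference \cite{delPinoKowalczykWei}), recasts \eqref{System2}--\eqref{BDRCONDPSI1} as the fixed-point problem \eqref{FixedPointSystem2}, and shows the forcing terms are $O(e^{-c\eta/\A})$ because they live where $1-\zeta_2\neq 0$ or where $\beta_\eta$ varies. You fill in the linear lemma with Lax--Milgram, a barrier comparison, and uniform $W^{2,p}$ estimates on unit balls, which is more detail than the paper supplies but not a different argument.
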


\medskip
Hence, using Proposition \ref{Solving System2}, we solve equation
\eqref{System1'} with $\psi=\Psi(\phi)$. Let us set
$$
\N(\phi) \,:=\,R(\phi)\,+\,  (f'(u_2) - f'(w(t))\phi \,+\,
\zeta_2 (u_2 -  \mathbb{H}(x))\Psi(\phi) \,+\, \zeta_2N(\phi + \Psi(\phi)), \quad
\hbox{in } M_{\A}\times\R.
$$

So, we only need to solve
\begin{equation}\label{System1''}
\pp_{tt} \phi + \Delta_{M_{\A}} \phi +f'(w(t))\phi =\, -\,
\widetilde{S}(u_2) - \N(\phi) + c(y)w'(t), \quad \hbox{in } M_{\A}\times \R.
\end{equation}
\begin{equation}\label{BDRCONDPHI2}
\pp_{\tau_{\A}}\phi + \mathcal{B}(\phi) = \tilde{B}(u_2), \quad \hbox{on } \pp M_{\A}\times \R
\end{equation}
\begin{equation}\label{ORTHOGCOND}
\int_{\R}\phi(\cdot,t)w'(t)dt = 0, \quad y\in M_{\A}
\end{equation}

\medskip
To solve problem \eqref{System1''}-\eqref{BDRCONDPHI2}-\eqref{ORTHOGCOND}, we solve a nonlinear problem in
$\phi$, that basically eliminates the parts of the error, that do not contribute to the projections.

\medskip
The linear theory we develop to solve problem \eqref{System1''}-\eqref{BDRCONDPHI2}-\eqref{ORTHOGCOND}, considers right hand sides and boundary data with a behavior similar to the that of the error $\widetilde{S}(u_2)$ and $\tilde{B}(u_2)$, that as we have seen, is
basically of the form $\OO(e^{-\sigma|t|})$.

\medskip

Using the fact
that $\N(\phi)$ is Lipschitz with small Lipschitz constant and
contraction mapping principle in a ball of radius $\OO(\A^3)$ in the
norm $\|\cdot\|_{2,p,\sigma}$, we solve equation \eqref{System1''}-\eqref{BDRCONDPHI2}-\eqref{ORTHOGCOND}. This
solution $\phi$, defines a Lipschitz operator $\phi=\Phi(h)$. This information is collected in the following proposition

\begin{prop}\label{Solvinng Projected system1''}
Assume $3 < p \leq \infty$ and $\sigma >0$ is small
enough. There exists an universal constant $C >0$, such that
\eqref{System1''}-\eqref{BDRCONDPHI2}-\eqref{ORTHOGCOND} has a unique solution $\phi =
\Phi(h)$, satisfying

$$
\|\phi\|_{2,p,\sigma} \leq C\A^3
$$
and
$$
\|\Phi(h_1) - \Phi(h_2)\|_{2,p,\sigma} \leq C\A^2
\|h_1 - h_2\|_{*}.
$$
\end{prop}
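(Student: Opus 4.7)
The plan is to recast \eqref{System1''}-\eqref{BDRCONDPHI2}-\eqref{ORTHOGCOND} as a fixed-point equation and apply the contraction mapping principle in a ball of radius $\OO(\A^3)$ in the norm $\|\cdot\|_{2,p,\sigma}$. The key ingredient is an a priori linear theory for the projected problem
$$
\pp_{tt}\phi + \Delta_{M_{\A}}\phi + f'(w(t))\phi = g + c(y)w'(t), \quad \pp_{\tau_{\A}}\phi + \mathcal{B}\phi = G, \quad \int_{\R}\phi(\cdot,t)w'(t)\,dt = 0,
$$
which, under the nondegeneracy assumption (II) and the well known fact that $w'$ spans the kernel of $\pp_{tt} + f'(w(t))$ on $\R$, produces a unique pair $(\phi,c)$ with the estimate $\|\phi\|_{2,p,\sigma} \leq C(\|g\|_{p,\sigma} + \|G\|_{p,\sigma})$. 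This is the invertibility statement already referenced as Proposition \ref{ProjectedProblemInvertTheory}.

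With this linear theory in hand, I would carefully estimate the driving error $\tilde{S}(u_2)$ and the boundary datum $\tilde{B}(u_2)$ term by term, using the a priori bound $\|h\|_* \leq \mathcal{K}\A$. Inspecting \eqref{ErrorU11TILDE}, the dominant contribution $-\A^2\{\Delta_{M}h + |A_{M}|^2h\}w'(t)$ is proportional to $w'(t)$ and enters the right hand side at size $\OO(\A^3)$; every other term listed in \eqref{ErrorU11TILDE} and \eqref{BDRERROREXPLICIT1} is of order $\A^4$ or better in $\|\cdot\|_{p,\sigma}$ thanks to the exponential decay in $t$ that has been built into the definitions of $\phi_1,\phi_2,\phi_3$. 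The boundary error \eqref{bdrerror1}, once the boundary condition imposed on $h$ at the end of section \ref{APPROXSOL} is taken into account, is likewise bounded by $C\A^3$ in $\|\cdot\|_{p,\sigma}$ on $\pp M_{\A}\times\R$.

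Next, I would estimate the nonlinear operator
$$
\N(\phi) = R(\phi) + (f'(u_2) - f'(w(t)))\phi + \zeta_2(u_2 - \mathbb{H})\Psi(\phi) + \zeta_2 N(\phi + \Psi(\phi)).
$$
The remainder $R(\phi)$ involves only $\OO(\A)$ coefficients on derivatives of $\phi$ coming from $\Delta_{\NN_{\A,h}} - \pp_{tt} - \Delta_{M_{\A}}$ on the support of $\zeta_4$, yielding $\|R(\phi)\|_{p,\sigma} \leq C\A\,\|\phi\|_{2,p,\sigma}$. The coefficient $f'(u_2) - f'(w(t))$ is of order $\A^2$ pointwise on the support of $\zeta_2$. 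The coupling term $\zeta_2(u_2-\mathbb{H})\Psi(\phi)$ is exponentially small by \eqref{size of Psi} together with the Lipschitz bound \eqref{Lipschitz psi}, and the quadratic term $\zeta_2 N(\phi+\Psi(\phi))$ is $\OO(\|\phi\|_{2,p,\sigma}^2)$. Combining these bounds, $\N$ maps the ball $\{\|\phi\|_{2,p,\sigma} \leq A\,\A^3\}$ into itself for a suitable universal constant $A$ and has Lipschitz constant tending to zero as $\A \to 0$.

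The Banach fixed-point theorem then yields the unique solution $\phi = \Phi(h)$ with $\|\Phi(h)\|_{2,p,\sigma} \leq C\A^3$. The Lipschitz dependence in $h$ follows by a perturbative argument: the explicit dependence of $\tilde{S}(u_2)$ and $\tilde{B}(u_2)$ on $h$, through terms of the form $\A^2 P(h,\nabla h, D^2 h)$ with $P$ at most linear at leading order, gives $\|\tilde{S}(u_2)[h_1] - \tilde{S}(u_2)[h_2]\|_{p,\sigma} + \|\tilde{B}(u_2)[h_1] - \tilde{B}(u_2)[h_2]\|_{p,\sigma} \leq C\A^2 \|h_1-h_2\|_*$, and the same bound holds for the difference of $\N$ evaluated at the respective solutions. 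The main technical obstacle, in my view, is the bookkeeping in the error estimate of the second step: tracking simultaneously the exponential decay in $t$ and the polynomial decay along the ends of $M_{\A}$ across the many terms of \eqref{ErrorU11TILDE} and \eqref{BDRERROREXPLICIT1}, and verifying that every contribution either absorbs into the Lagrange multiplier $c(y)w'(t)$ via the orthogonality condition or is genuinely of size $\OO(\A^3)$ in $\|\cdot\|_{p,\sigma}$.
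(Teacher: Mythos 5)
Your approach is essentially identical to the paper's: recast \eqref{System1''}--\eqref{BDRCONDPHI2}--\eqref{ORTHOGCOND} as a fixed-point equation for the solution operator $T$ of Proposition \ref{ProjectedProblemInvertTheory}, estimate the driving error $\tilde S(u_2)$, the boundary datum $\tilde B(u_2)$, and the nonlocal term $\N(\phi)$ under $\|h\|_*\leq\mathcal K\A$ and $\|\phi\|_{2,p,\sigma}\leq 1$, and contract on the ball $\{\|\phi\|_{2,p,\sigma}\leq A\A^3\}$ using the $\OO(\A)$ Lipschitz constant of $\N$. The one quantitative overstatement is your claim that all non-leading terms of \eqref{BDRERROREXPLICIT1} are $\OO(\A^4)$: several of them (the gluing term $2\A\pp_\rho\beta(\A s)\pp_s\phi_3$, the products $f'(w)\phi_2\phi_3$ and $[f'(u_1)-f'(w)]\phi_2$, and the $\A^2\tilde a_i,\tilde b_i$ terms acting on $\phi_2$) are genuinely only $\OO(\A^3)$ -- yet this does not open a gap, since the paper's own estimate is the weaker $\|\tilde S(u_2)+\A^2\{\Delta_M h + |A_M|^2 h\}w'(t)\|_{p,\sigma}\leq C\A^3$, which is already enough to place the fixed point in a ball of radius $\OO(\A^3)$.
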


\medskip

\subsection{Adjusting $h$, to make the projection equal
zero.} In this part we set $c_0=\|w'\|^{2}_{L^2(\R)}$. To conclude the proof of Theorem \ref{theo1}, we adjust $h$ so that
$$
c(y)=\int_{\R} \left[\widetilde{S}(u_2) + \N(\phi)\right]w'(t)dt=0.
$$

Let us integrate \eqref{System1''}-\eqref{BDRCONDPHI2}-\eqref{ORTHOGCOND} against $w'(t)$ to find that
$$
\int_{\R}\tilde{S}(u_2)w'(t)dt \,=\, \underbrace{(1 -\beta_{\A})\int_{\R}\tilde{S}(u_1)w'(t)dt}_{A} +\underbrace{\int_{\R}\tilde{S}(u_2)w'(t)dt}_{B} + \OO_{L^{\infty}(M_{\A})}(\A^4)
$$
clearly $\beta_{\A}$ does not depend on $t$ so that we can compute from \eqref{ErrorU11TILDE}
$$
\int_{\R}\tilde{S}(u_1)w'(t)dt = -\A^2\{\Delta_{M}h + |A_{M}|^2h\}c_0
$$

$$
-\A^3\int_{\R}\zeta)4(t+h)a_{ij}^1(\A y,\A(\A(t+h)))\left\{\pp_{ij}hw'(t)
- \pp_{i}h\pp_{j}hw''(t)\right\}w'(t)dt
$$

$$
 - \A^3\int_{\R}\zeta_4(t+h)b_1^1(\A y,\A(t+h))\pp_{i}h(w'(t))^2dt + \A^4\int_{\R}(t+h)^3\zeta_4b_3^1(\A y,\A(t+h))(w'(t))^2dt
$$

$$
+\A^4|A_{M}|^2\int_{\R}t\pp_{t}\psi_1(t)w'(t)dt + \A^5P_{1}(\A y,h,\nabla h,D^2 h).
$$

Next we write the reduced error near the boundary. From \eqref{BDRERROREXPLICIT1} we obtain that
$$
\int_{\R}\tilde{S}(u_2)= \int_{\R}\tilde{S}(u_1) - \A^2|A_{M}|^2 \int_{\R}t\pp_{t}\phi_2w'(t)dt - 2\A\frac{I(\A \theta)}{l_1(\A \theta)l_2(\A \theta)}h\int_{\R}\pp{st}\phi_2w'(t)dt
$$

$$
+2\A^2\frac{I(\A \theta)}{l_1(\A \theta)l_2(\A \theta)}\pp{\rho}\int_{\R}(t+h)\pp_{tt}\phi_2w'(t)dt - \A l_1^{-2}(\A \theta)\pp_{\vv\vv}h\int_{\R}\pp_{\theta t}\phi_2w'(t)dt  - \A l_2^{-2}(\A \theta)\pp_{\rho\rho}h\int_{\R}\pp_{s t}\phi_2w'(t)dt
$$

$$
+\A^2\int_{\R}\zeta_4 \tilde{a}_{1}(\A s,\A \theta,t)\left\{\pp_{\theta t}\phi_{2} - \A \pp_{\vv}\pp_{tt}\phi_2\right\}w'(t)dt +\A^2\int_{\R}\zeta_4 \tilde{a}_{2}(\A s,\A \theta,t)\left\{\pp_{s t}\phi_{2} - \A \pp_{\rho}\pp_{tt}\phi_2\right\}w'(t)dt
$$

$$
+\A^2\int_{\R}\zeta_4 \tilde{b}_{1}(\A s,\A \theta,t)\left\{\pp_{\theta }\phi_{2} - \A \pp_{\vv}\pp_{t}\phi_2\right\}w'(t)dt
+\A^2\int_{\R}\zeta_4 \tilde{b}_{2}(\A s,\A \theta,t)\left\{\pp_{s}\phi_{2} - \A \pp_{\rho}\pp_{t}\phi_2\right\}w'(t)dt
$$

$$
\A^3\int_{\R}\zeta_4(t+h)\tilde{b}_3^1(\A s,\A \theta,\A (t+h))\pp_{t}\phi_2w'(t)dt - 2\A^2\frac{I(\A \theta)}{l_1(\A \theta)l_2(\A \theta)}(t+h)\int_{\R}\left\{\pp_{st}\phi_3 - \A\pp_{\rho\rho}\pp_{tt}\phi_3\right\}w'(t)dt
$$

$$
\int_{\R}[f'(u_1) -f'(w(t))](\phi_2+ \phi_3)w'(t)dt + \A^3\tilde{R}_{0,\A}(\A s,\A \theta) + \A^4\int_{\R}\tilde{R}_{\A}(\A s, \A \theta ,h,\nabla h, D^2 h)w'(t)dt.
$$

\medskip
Also observe that from the nonlocal terms and condition \eqref{APRIORIESTNODALSET} we have
$$
\Q(\A y, h, \nabla h,D^2 h)=\int_{\R}\N(\phi)w'(t)dt , \quad \|\Q(\cdot, h, \nabla h ,D^2 h)\|_{L^{\M}} \geq C\A^{4-\frac{2}{p}}
$$

Which implies that
\begin{equation}\label{REDUCEDEQN}
\A^{-2}\int_{\R}\left(\tilde{S}(u_2) + \N(\phi)\right)w'(t)dt= -c_0\{\Delta_{M}h + |A_{M}|^2h\} + \A P_{0}(y,h,\nabla h,D^2 h) + \A^{2-\frac{2}{p}} P_1(y,h,\nabla h, D^2 h)
\end{equation}
where
$$
\|P_1\|+ \|P_0\|\leq C.
$$
$$
|D P_0| + |D P_1|\leq C.
$$

\medskip
As for the boundary condition, directly from \eqref{bdrerror1} we ask $h$ to satisfy
\begin{equation}\label{ROBINBDRCONDh}
\pp_{\rho}h +I(\A \theta)h=\A c_1 I(\vv)m_1(\vv)+
2m_1(\vv)c_2(\frac{\vv}{\A})
\end{equation}
where we recall that
$$
\|c_{2}\|_{L^{\infty}(\pp M_{\A})}\leq C\A
$$
and the right hand side in \eqref{ROBINBDRCONDh} does not depend on $h$.

\medskip
We solve then
\begin{equation}\label{REDUCEDEQN}
c_0\{\Delta_{M}h + |A_{M}|^2h\} = \A P_{0}(y,h,\nabla h,D^2 h) + \A^{2-\frac{2}{p}} P_1(y,h,\nabla h, D^2 h)
\end{equation}
with the boundary condition \eqref{ROBINBDRCONDh} as a direct consequence of the theory developed in section \ref{JacobiOprExampl} and a fixed point argument for $h$ in a ball or order $\OO(\A)$ in the topology induced by the norm$\|\cdot\|_{*}$. This completes the proof of our theorem.


\medskip
\section{projected linear problem}\label{projectedequation}
In this part we provide the linear theory for the problem
\begin{equation}\label{Projected EQN}
\pp_{tt} \phi + \Delta_{M_{\A}} \phi + f(w(t))\phi = g + c(y)w'(t), \quad \hbox{in }M_{\A} \times \R
\end{equation}
\begin{equation}\label{bdrcondit+orthogcond}
\int_{\R}\phi w'dt=\Lambda(y), \quad  y\in M_{\A}, \quad \frac{\pp \phi}{\pp \tau_{\A}} =h, \quad \hbox{on }\pp M_{\A}\times \R
\end{equation}
relies strongly on the fact that solutions to
$$
\pp_{tt} \phi + \Delta_{M_{\A}} \phi + f(w(t))\phi =0, \quad \hbox{in }M_{\A} \times \R
$$
$$
\frac{\pp \phi}{\pp \tau_{\A}} =0, \quad \hbox{on }\pp M_{\A}\times \R.
$$
are the scalar multiples of $w'(t)$. The proof follos the same lines of lemma 5.1 in \cite{9}. We simply remark that when decomposing the solution $\phi$ as
$$
\phi= c(y)w'(t) + \phi^{\perp}
$$
from maximum principle one obtains that $|\phi^{\perp}(y,t)|\leq C e^{-\sigma|t|}$ for somoe $0<\sigma < \sqrt{2}$. Defining
$$
\psi(y) = \int_{R}|\phi(y,t)^{\perp}|^2 dt
$$
it follows that for certain positive constant $\la$
$$
-\Delta_{M_{\A}}\psi + \la \psi \leq 0, \quad \frac{\ \pp\psi}{\pp \tau_{\A}}=0
$$
where $\tau_{\A}$ is the iward unit tangent to $\pp M_{\A}$ in $M_{\A}$. Clearly it follows that $\psi =0$ and consequently $c(y)$ is a constant function.

\medskip
Proceeding as in section 3 in \cite{delPinoKowalczykWei} it suffices to solve the case $\Lambda= h=0$ and $\int_{g}w'(t)dt=0$. To prove existence we set
$$
<\phi,\psi>:= \int_{M_{\A}\times\R}\nabla \phi \cdot \nabla \psi + 2 \phi\cdot\psi
$$
and we consider the space $H$ of function $\phi\in H^1(M_{\A} \times \R)$ such that
$$
\int_{M_{\A}\times \R}
\phi\cdot w'=0.
$$

\medskip
Since $f'(w(t)) = -2 + \OO(e^{-\sqrt{2}|t|})$ as $|t| \to \infty$, the equation can be put into the setting
$$
(I + K)\phi = f, \quad \hbox{in }H
$$
where $K:H\to H$ is a compact operator. From fredholm alternative it follows that existence and
$$
\|\phi\|_{L^2(M_{\A}\times \R)}\leq C\|g\|_{L^2(M_{\A}\times\R)}.
$$

\medskip
As for the apriori estimates, we can proceed using a blow up argument following the same lines as in the local elliptic regularity as in \cite{9}. We remark that in our case we also need to consider two limiting blow up situations the case of $\R^2 \times \R$ when taking limit well inside $M_{\A}\times \R$ and the case of the half space  $\R_{+}\times \R^2$ when taking the limit in coordinates close to $\pp M_{\A} \times \R$. The former case is reduced to the case of $\R^2\times \R$ as limiting situation by using an odd reflection respect to the boundary of $\R_{+}\times \R^2$.

\medskip
Thus we have proven the following proposition

\begin{prop}\label{ProjectedProblemInvertTheory}
For every $p<3$ and for every $\A>0$ small enough and given arbitrary functions
$g$ defined in $M_{\A}\times \R$ and $G$ defined in $\pp M_{\A}\times \R$ such that
$$
\|g\|_{p,\sigma} + \|G\|_{p,
\sigma}<\infty
$$
there exists a unique bounded solution $\phi$ to problem \eqref{Projected EQN}-\eqref{bdrcondit+orthogcond} satisfying the apriori estimate
$$
\|D^2 \phi\|_{p,\sigma}+\|D\phi\|_{\infty,\sigma}+\|\phi\|_{\infty,\sigma}\leq C(\|g\|_{p,\sigma} + \|G\|_{p,
\sigma})
$$
where the constant $C$ dependens only on $p>0$.
\end{prop}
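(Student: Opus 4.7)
\medskip
My overall plan is to prove the proposition in three stages: first I reduce to homogeneous boundary data and mean-zero right-hand side, then I establish solvability in $L^2$ by a variational/Fredholm argument based on the characterization of the kernel of the linearized operator about $w'(t)$, and finally I upgrade the $L^2$ bound to the weighted $\|\cdot\|_{p,\sigma}$ estimate via a blow-up argument combined with standard local elliptic regularity.

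\medskip
\textbf{Reduction.} Given $(g,G,\Lambda)$, I first subtract off a convenient particular function to reduce to the case $\Lambda\equiv 0$, $G\equiv 0$ and $\int_\R g\,w'\,dt\equiv 0$. For $\Lambda$, I subtract $\Lambda(y)w'(t)/\|w'\|_{L^2(\R)}^2$, which is exponentially decaying in $t$ and contributes only lower order terms absorbed into $g$ and $c(y)w'(t)$. The boundary datum $G$ can be absorbed by subtracting a suitable extension $E(y,t)=\chi(s)\,G(y',t)e^{-\sigma|t|}$ built using a cutoff near $\pp M_\A$ (where $y=(s,y')$ in the boundary Fermi coordinates introduced in section \ref{GEOMCOMP}); this extension has $\pp_{\tau_\A}E=G$ on $\pp M_\A\times\R$ and its $W^{2,p}_\sigma$ norm is controlled by $\|G\|_{p,\sigma}$. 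Finally, the mean-zero condition on $g$ is precisely what the Lagrange multiplier $c(y)w'(t)$ is there to enforce, so I replace $g$ by $g-\frac{1}{\|w'\|^2_{L^2}}\bigl(\int_\R g\,w'\,dt\bigr)w'(t)$.

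\medskip
\textbf{Solvability in $L^2$.} In the reduced setting the problem has the variational structure
\[
B(\phi,\psi):=\int_{M_\A\times\R}\bigl(\nabla\phi\cdot\nabla\psi-f'(w(t))\phi\,\psi\bigr)\,dA_\A=\int_{M_\A\times\R} g\,\psi\,dA_\A,
\]
posed on the Hilbert space $H=\{\phi\in H^1(M_\A\times\R)\,:\,\int_\R\phi(y,t)w'(t)\,dt=0\ \forall y\in M_\A\}$ with Neumann boundary trace on $\pp M_\A\times\R$. Writing $f'(w(t))=-2+V(t)$ with $V(t)=\OO(e^{-\sqrt 2|t|})$ decaying at infinity, I split $B=B_0+K$ where $B_0(\phi,\psi)=\int\nabla\phi\cdot\nabla\psi+2\phi\,\psi$ is coercive on $H$ and $K(\phi,\psi)=\int V\phi\psi$ corresponds to a compact perturbation (by compact embedding on each bounded cylinder and the decay of $V$ at infinity). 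Fredholm alternative then reduces existence to the statement that $B(\phi,\psi)=0$ for all $\psi\in H$ forces $\phi=0$, which is exactly the kernel characterization. I prove this by the argument already outlined: decomposing $\phi=c(y)w'(t)+\phi^\perp(y,t)$ with $\int\phi^\perp w'\,dt=0$, classical ODE theory in $t$ together with exponential dichotomy forces $|\phi^\perp|\le Ce^{-\sigma|t|}$; defining $\psi(y)=\int_\R|\phi^\perp(y,t)|^2dt$ and testing against $\phi^\perp$ gives
\[
-\Delta_{M_\A}\psi+\la\psi\le 0\quad\text{in }M_\A,\qquad \pp_{\tau_\A}\psi\le 0\quad\text{on }\pp M_\A,
\]
for some $\la>0$; maximum principle forces $\psi\equiv 0$, hence $\phi^\perp\equiv 0$. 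Then $c(y)$ satisfies $\Delta_{M_\A}c=0$ with Neumann boundary condition, so $c$ is constant, and the constraint $\int\phi w'\,dt=0$ forces $c=0$.

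\medskip
\textbf{A priori estimate and the main obstacle.} The weighted bound
\[
\|D^2\phi\|_{p,\sigma}+\|D\phi\|_{\infty,\sigma}+\|\phi\|_{\infty,\sigma}\le C\bigl(\|g\|_{p,\sigma}+\|G\|_{p,\sigma}\bigr)
\]
I plan to establish by contradiction and blow-up. Assuming a sequence $(\phi_n,g_n,G_n,\A_n)$ violates the bound with $\|\phi_n\|_{2,p,\sigma}=1$ and $\|g_n\|_{p,\sigma}+\|G_n\|_{p,\sigma}\to 0$, pick base points $(y_n,t_n)$ realizing (approximately) the weighted sup, multiply by $e^{\sigma|t_n|}$, and translate. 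The geometry of $M_{\A_n}$ at scale one converges, after rescaling $\A_n\to 0$, to one of two model spaces: $\R^2\times\R$ (when the base point stays a positive distance from $\pp M_{\A_n}$) or the half space $\R_+\times\R^2$ (when the base point is at bounded distance from the boundary). In both cases the limiting equation is $\pp_{tt}\phi+\Delta_{y'}\phi+f'(w(t))\phi=0$ with either no boundary condition or a homogeneous Neumann condition on a hyperplane, together with the orthogonality $\int\phi w'\,dt=0$. For the half-space limit I extend $\phi$ across the boundary by even reflection in $s$, reducing to the full-space problem. In both cases the only bounded solutions with mean-zero constraint are identically zero (this is exactly the nondegeneracy of $w$ as a heteroclinic, cf.\ Lemma~5.1 in \cite{9}), contradicting that $\|\phi_n\|_{\infty,\sigma}$ was normalized to $1$. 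Standard $W^{2,p}$ local regularity then upgrades the $L^\infty$ estimate to the full $\|\cdot\|_{2,p,\sigma}$ estimate. The main obstacle I anticipate is handling the boundary blow-up carefully: one must control the boundary Fermi metric as $\A_n\to 0$ so that the limiting half-space operator genuinely has constant coefficients, and verify that the Neumann trace $\pp_{\tau_{\A_n}}\phi_n$ passes to the limit correctly in $L^p_{\mathrm{loc}}$, which requires the asymptotic flatness of the coefficients in \eqref{DILATLAPBELBDR} and \eqref{bdrcondition} under scaling.
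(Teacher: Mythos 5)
Your proposal follows essentially the same route as the paper: reduce to $\Lambda=0$, $G=0$ and mean-zero $g$, obtain $L^2$ solvability via Fredholm alternative in $H=\{\phi\in H^1:\int_\R\phi\,w'\,dt=0\}$ using that $f'(w(t))=-2+\OO(e^{-\sqrt2|t|})$, characterize the kernel by decomposing $\phi=c(y)w'(t)+\phi^\perp$ and applying the maximum principle to $\psi(y)=\int_\R|\phi^\perp|^2\,dt$, and finish with a blow-up argument that has two limiting models ($\R^2\times\R$ in the interior, $\R_+\times\R^2$ near $\pp M_\A$) plus a reflection across the flat boundary together with local $W^{2,p}$ regularity. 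The one place you diverge from the text is that you use an \emph{even} reflection in $s$ to remove the half-space Neumann condition, whereas the paper states "odd reflection." For the homogeneous Neumann condition $\pp_{\tau_\A}\phi=0$ the even reflection is the one that produces a $C^{1,1}$ extension solving the full-space equation; odd reflection would be appropriate for Dirichlet data and would not even give a continuous extension unless $\phi$ vanished on the wall. Your choice is the correct one, and the paper's wording there is almost certainly a slip.
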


\section{gluing reduction and solution to the projected problem.}

In this section, we prove Lemma \ref{Solving System2} and then we
solve the nonlocal projected problem  \eqref{System1''}-\eqref{BDRCONDPHI2}-\eqref{ORTHOGCOND}. The notations we use in this section have been set up in sections \ref{APPROXSOL} and \ref{PROOFTHEO1}.

\medskip
\subsection{Solving the Gluing System.}\label{SOLVINGGLUING} Given a fixed $\phi$ such that
$\|\phi\|_{2,p,\sigma} \leq 1$, we solve problem \eqref{System2} with boundary condition \eqref{BDRCONDPSI1}. To begin with, we observe that there exist constants $a < b$, independent of $\A$,
such that
$$
0<a \leq Q_{\A}(x) \leq b, \quad \hbox{ for every } x \in \R^3
$$
where $Q_{\A}(x) = 2 - (1 - \zeta_2)[f'(U)+2]$. Using this remark we study the problem
\begin{equation}
\begin{array}{ccc}
\Delta \psi - Q_{\A}(x) \psi = g(x), \quad x\in\Omega_{\A}
\\
\\
\frac{\pp \psi}{\pp n_{\A}} =G(x), \quad \hbox{on }\pp \Omega_{\A}
\end{array}
\label{GluingEquation}
\end{equation}
for  given $g, G$.  COncerning solvability of this linear problem we have the following lemma.

\medskip
\begin{lemma}
Assume $3<p\leq\infty$ . There exists a constant
$C>0$ and $\A_0 >0$ small enough such that for $0<\A<\A_0$ and any
given $g,G$ with
$$
\|g\|_{L^p(\Omega_{\A})}+ \|G\|_{L^p(\pp \Omega_{\A})} <\infty
$$
equation
\eqref{GluingEquation} has a unique solution $\psi = \psi(g)$,
satisfying the a-priori estimate
$$
\|\psi\|_{X} \leq C(
\|g\|_{L^p(\Omega_{\A})}+ \|G\|_{L^p(\pp \Omega_{\A})})
$$
\end{lemma}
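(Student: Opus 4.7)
The plan is to exploit the uniform positivity $Q_{\A}(x)\geq a>0$, which renders the operator $\Delta-Q_{\A}$ with Neumann data essentially behave like $\Delta-1$ at unit scale. Its key consequence is coercivity of the bilinear form
$$
a(\psi,\phi):=\int_{\Omega_{\A}}\nabla\psi\cdot\nabla\phi+Q_{\A}\psi\phi
$$
on $H^1(\Omega_{\A})$, with constants independent of $\A$. Lax--Milgram applied to the variational formulation with right-hand side $\phi\mapsto -\int_{\Omega_{\A}} g\phi+\int_{\pp\Omega_{\A}} G\phi$ yields a unique weak solution $\psi$ obeying $\|\psi\|_{H^1(\Omega_{\A})}\leq C(\|g\|_{L^2}+\|G\|_{L^2(\pp\Omega_{\A})})$. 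Uniqueness for the full problem is immediate from the same coercivity.

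Next I would upgrade to an $L^{\infty}$ estimate. After subtracting a harmless boundary-layer function $\tilde\psi$ with $\pp_{n_{\A}}\tilde\psi=G$ and $\|\tilde\psi\|_{\infty}\leq C\|G\|_{L^p(\pp\Omega_{\A})}$, I reduce to the homogeneous Neumann case. Since $Q_{\A}\geq a$, a Green's function / barrier argument based on the fundamental solution of $-\Delta+a$ on $\R^3$ (which decays exponentially) yields $|\psi(x)|\leq C\int_{\Omega_{\A}}e^{-c|x-y|}|g(y)|\,dy$; Hölder's inequality on unit balls, together with $p>3$, bounds this by $C\|g\|_{p,\ts}\leq C\|g\|_{L^p(\Omega_{\A})}$. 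Equivalently, a Moser iteration starting from the $H^1$ bound reaches the same conclusion.

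With $\|\psi\|_{\infty}$ under control, $W^{2,p}$ information follows from local elliptic regularity. Covering $\Omega_{\A}$ by unit balls and applying interior and boundary Calderón--Zygmund estimates to $\Delta\psi=Q_{\A}\psi+g$ with Neumann data on the pieces touching $\pp\Omega_{\A}$ gives
$$
\|D^2\psi\|_{L^p(B_{1/2}(x))}\leq C\bigl(\|g\|_{L^p(B_1(x))}+\|G\|_{L^p(B_1(x)\cap\pp\Omega_{\A})}+\|\psi\|_{L^{\infty}(B_1(x))}\bigr).
$$
Taking supremum over $x$ yields the $\|D^2\psi\|_{p,\ts}$ bound, and the hypothesis $p>3$ is exactly what is needed for Sobolev embedding $W^{2,p}_{\mathrm{loc}}\hookrightarrow C^{1}_{\mathrm{loc}}$ to upgrade this to the $\|D\psi\|_{\infty}$ bound, completing the $X$-norm estimate.

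The main obstacle is securing uniformity in $\A$: one must check that the constants appearing in the Calderón--Zygmund and $L^{\infty}$ steps do not degenerate as $\A\to 0$. Because $\Omega_{\A}=\A^{-1}\Omega$ is the dilate of a fixed smooth domain, the principal curvatures of $\pp\Omega_{\A}$ are of order $\A$, so at unit scale its boundary is almost flat and the local estimates are governed by the fixed $C^2$ geometry of $\pp\Omega$. This uniformity is precisely what allows $C$ in the final estimate to be independent of $\A$, and with it the contraction mapping arguments of the gluing scheme go through.
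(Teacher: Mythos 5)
Your argument follows the standard route---Lax--Milgram for existence and uniqueness, a barrier or Moser-type argument for the $L^\infty$ bound, then local Calder\'on--Zygmund estimates on overlapping unit balls, using that at unit scale $\pp\Omega_{\A}$ is a small $C^2$ perturbation of a plane so the constants are $\A$-independent. This is precisely the approach the paper intends by ``standard'' and its reference to section~2 of \cite{delPinoKowalczykWei}, so in spirit your proposal matches the paper.

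One step as written would fail, and it is worth flagging even though the imprecision is already present in the lemma's own statement: the local boundary estimate
\begin{equation*}
\|D^2\psi\|_{L^p(B_{1/2}(x))}\leq C\bigl(\|g\|_{L^p(B_1(x))}+\|G\|_{L^p(B_1(x)\cap\pp\Omega_{\A})}+\|\psi\|_{L^{\infty}(B_1(x))}\bigr)
\end{equation*}
is not true for Neumann data $G$ controlled only in $L^p(\pp\Omega_{\A})$. The sharp boundary Calder\'on--Zygmund estimate for the Neumann problem needs $G\in W^{1-1/p,p}(\pp\Omega_{\A})$; a datum with a point singularity $G\sim|x'|^{-\alpha}$ with $0<\alpha<2/p$ belongs to $L^p$ of the two-dimensional boundary but produces $\psi\sim|x|^{1-\alpha}$, hence $D^2\psi\sim|x|^{-1-\alpha}\notin L^p$ in a three-dimensional half-ball whenever $p>3$ and $\alpha>0$. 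The same issue appears more immediately in the $\|D\psi\|_{\infty}$ component of $\|\psi\|_X$: since $\pp_{n_{\A}}\psi=G$ on $\pp\Omega_{\A}$ and $L^p(\pp\Omega_{\A})\not\hookrightarrow L^{\infty}$, a bound $\|D\psi\|_{\infty}\leq C\|G\|_{L^p(\pp\Omega_{\A})}$ cannot hold. The fix is to measure $G$ in $W^{1-1/p,p}(\pp\Omega_{\A})$ (or in $C^{0,1}$, which embeds there); in the actual application the datum from \eqref{BDRCONDPSI1} is $G=-(u_2-\mathbb{H})\pp_{n_{\A}}\beta_{\eta}-\phi\,\pp_{n_{\A}}\zeta_2$, which is smooth with $C^1$ norm of order $e^{-\sigma\eta/\A}$, so the gluing scheme is unaffected once the norm on $G$ is strengthened. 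Only the stated norm, not your overall scheme, needs repair.
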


The proof of this lemma is standard and we refer the reader to section 2 in \cite{delPinoKowalczykWei} for details.

Now we prove Proposition \ref{Solving System2}. Denote by $X$, the
space of functions $\psi \in W^{2,p}(\Omega_{\A})$ such that $\|\psi\|_X
<\infty$ and let us denote by $\Gamma(g,G) = \psi$ the solution to the
equation \eqref{GluingEquation}, from the previuos lemma. We see that the linear map $\Gamma$ is continuous i.e
$$
\|\Gamma(g,G)\|_X \leq C(
\|g\|_{L^p(\Omega_{\A})}+ \|G\|_{L^p(\pp \Omega_{\A})}
)
$$
Using this we can recast \eqref{System2} as a fixed point problem,
in the following manner
\begin{equation}
\psi =- \Gamma \left((1-\zeta_2) S(U) + (1- \zeta_2)N[\zeta_2\phi +
\psi], (u_2 -  \mathbb{H}(x))
\frac{\pp\beta_{\eta}}{\pp n_{\A}} + \phi \frac{\pp \zeta_2}{\pp n_{\A}}\right) \label{FixedPointSystem2}
\end{equation}
Let us take $\phi$, $h$satisfying that
$$
\|\phi\|_{2,p,\sigma} \leq 1, \quad \|h\|_{*}\leq K\A.
$$

Estimating the size of the right-hand side in
\eqref{FixedPointSystem2}.

\medskip
Recall that $S(U) = \zeta_2 \tilde{S}(u_2) + E$. So, we estimate directly to
get
$$
|(1 - \zeta_2)\tilde{S}(u_2)|\leq C\A^2e^{-\sigma|t|}(1 -
\zeta_2) \leq C\A^2 e^{-\sigma\frac{\eta}{\A}}
$$
this means that
$$
|(1 - \zeta_2)\tilde{S}(u_2)|\leq C\A^2 e^{-\sigma\frac{\eta}{\A}}
$$
and so $\|(1 - \zeta_2)\tilde{S}(U)\|_{L^{p}(\Omega_{\A})} \leq C\A^2
e^{-\sigma\frac{\eta}{\A}}$.

\medskip
As for the second term in the right-hand side of
\eqref{FixedPointSystem2}, the following holds true
$$
\begin{array}{ccc}
|2\nabla \zeta_2 \cdot \nabla \phi + \phi \Delta \zeta_2| &\leq& C
(1-\zeta_2)e^{-\sigma|t|}\|\phi\|_{2,p\sigma}\\
\\
&\leq& Ce^{-\sigma\frac{\eta}{\A}} \|\phi\|_{2,p,\sigma}.
\end{array}
$$
This implies that
$$
\|2\nabla \zeta_2 \cdot \nabla \phi + \phi \Delta \zeta_2\|_{\infty}
\leq Ce^{-c\frac{\eta}{\A}}.
$$

\medskip
Proceeding in the same fashion, we clearly obtain that the boundary condition satisfies
$$
\left\|(u_2 -  \mathbb{H}(x))
\frac{\pp\beta_{\eta}}{\pp n_{\A}} + \phi \frac{\pp \zeta_2}{\pp n_{\A}}\right\|_{\infty}\leq C e^{-\frac{\sigma\eta}{\A}}
$$

Finally we must check the lipschitz character of $(1-
\zeta_2)N[\zeta_2\phi + \psi]$. Take $\psi_1, \psi_2 \in X$. Then
\begin{eqnarray*}\
|(1- \zeta_2)N[\zeta_2\phi + \psi_1] &-& (1- \zeta_2)N[\zeta_2\phi + \psi_2]| \leq \\
&\leq&(1- \zeta_2)|f(U + \zeta_2 \phi + \psi_1) \\
&&- f(U + \zeta_2 \phi + \psi_2) - f'(U)(\psi_1 - \psi_2)|\\
&\leq& Ce^{-\sigma\frac{\eta}{\A}}(1 - \zeta_2)\sup_{t\in
[0,1]}|\zeta_1\phi + t\psi_1
+(1 -t)\psi_2||\psi_1 - \psi_2|\\
&\leq& Ce^{-\sigma\frac{\eta}{\A}}(\|\phi\|_{\infty,\sigma} +
\|\psi_1\|_{\infty} + \|\psi_2\|_{\infty})|\psi_1 - \psi_2|
\end{eqnarray*}
So, we see that
$$
\|(1- \zeta_2)N[\zeta_2\phi + \psi_1] - (1- \zeta_2)N[\zeta_2\phi +
\psi_1]\|_{\infty} \leq C e^{-\sigma\frac{\eta}{\A}}\|\psi_1 - \psi_2
\|_{\infty}
$$
In particular we see that $\|(1-
\zeta_2)N(\zeta_2\phi)\|_{\infty} \leq C e^{-\sigma\frac{\eta}{\A}}$.
Consider $\widetilde{\Gamma}:X \to X$,
$\widetilde{\Gamma}=\widetilde{\Gamma}(\psi)$ the operator given by
the right-hand side of \eqref{FixedPointSystem2}. From the previous
remarks we have that $\widetilde{\Gamma}$ is a contraction provided
$\A$ is small enough and so we have found $\psi =
\widetilde{\Gamma}(\psi)$ the solution to \eqref{System2}.

\medskip
We can check directly that $\Psi(\phi) = \psi$ is lipschitz in
$\phi$ i.e

\begin{eqnarray*}
\|\Psi(\phi_1) - \Psi(\phi_2)\|_X \leq C \|(1
-\zeta_2)[N(\zeta_1\phi_1 + \Psi(\phi_1)) -
N(\zeta_1\phi_2 + \Psi(\phi_2))]\|_{\infty,\mu}\\
+ C^{-\sigma\frac{\sigma \eta}{\A}}\|\phi_1 - \phi_2\|_{2,p,\sigma}\\
\leq Ce^{-c\frac{\eta}{\A}}\left(\|\Psi(\phi_1) - \Psi(\phi_2)\|_X +
\|\phi_1 - \phi_2\|_{2,p,\sigma}\right)
\end{eqnarray*}
Hence for $\A$ small, we conclude
$$
\|\Psi(\phi_1) - \Psi(\phi_2)\|_X \leq C e^{-c\frac{\eta}{\A}}
\|\phi_1 - \phi_2\|_{2,p,\sigma} .
$$

\medskip

\subsection{Solving the Projected Problem.} Now we solve problem
\eqref{System1''}-\eqref{BDRCONDPHI2}-\eqref{ORTHOGCOND} using the linear theory developed in
section \ref{projectedequation}, together with a fixed point argument. From the discussion
in \ref{SOLVINGGLUING}, we have a nonlocal operator $\psi = \Psi(\phi)$.

\medskip
Recall that
$$
\N(\phi) \,:=\,R(\phi)\,+\,  (f'(u_2) - f'(w(t))\phi \,+\,
\zeta_2 (u_2 -  \mathbb{H}(x))\Psi(\phi) \,+\, \zeta_2N(\phi + \Psi(\phi)), \quad
\hbox{in } M_{\A}\times\R.
$$

Let us denote
$$
N_1(\phi):= R(\phi) + \left[f'(u_2) - f'(w(t))\right]\phi
$$

$$
N_2(\phi) := \zeta_2(u_2 -  \mathbb{H}(x))\Psi(\phi)
$$

$$
N_3(\phi) := \zeta_2N(\phi + \Psi(\phi))
$$
We need to investigate the Lipschitz character of $N_i$, $i=1,2,3$.
We see that
$$
|N_3(\phi_1) - N_3(\phi_2)| = \zeta_2|N(\phi_1 + \Psi(\phi_1)) -
N(\phi_2 + \Psi(\phi_2))|
$$
$$
\leq C \zeta_2 \sup_{\tau \in [0,1]} |\tau(\phi_1 + \Psi(\phi_1)) +
(1-\tau)(\phi_2 + \Psi(\phi_2))|\cdot |\phi_1 - \phi_2 +
\Psi(\phi_1) - \Psi(\phi_2)|
$$
$$
\leq C\left[|\Psi(\phi_2)| + |\phi_1  -\phi_2| + |\Psi(\phi_1) -
\Psi(\phi_2)|+|\phi_2| \right]\cdot \left[|\phi_1 - \phi_2| +
|\Psi(\phi_1) - \Psi(\phi_2)|\right].
$$
This implies that
$$
\|N_3(\phi_1) - N_3(\phi_2)\|_{p,\sigma} \leq
C[e^{-\sigma\frac{\eta}{\A}} + \|\phi_1\|_{p,\sigma} +
\|\phi_2\|_{p,\sigma}] \cdot\|\phi_1 - \phi_2\|_{p,\sigma}.
$$

Now we check on $N_1(\phi)$. Clearly, we just have to pay attention
to $R(\phi)$. But notice that $R(\phi)$ is linear on $\phi$ and
$$
R(\phi)= - \A^2 \left\{ h''(\A y) +   \frac {\A y}{ 1+ (\A y)^2 }
h'(\A y) + \frac {2 (t+ h) }{(1+ (\A y)^2)^2} \right\}\pp_t \phi
$$
$$
-2\A h'(\A y)\pp_{ty}\phi + \A^2 [h'(\A y)]^2\pp_{tt}\phi
 + D_{\A,h}(\phi).
$$

Hence, from the assumptions made on $h$, we have that
$$
\|N_1(\phi_1) - N_1(\phi_2)\|_{p,\sigma} \leq C\A \|\phi_1 -
\phi_2\|_{2,p,\sigma}.
$$

\medskip
Observe also that under the assumption made on $h$ we have
$$
\|\tilde{S}(u_2)+\A^2\{\Delta_{M} h + |A_{M}|^2 h\}w'(t)\|_{p,\sigma} \leq C \A^3
$$
Hence we for $\|\phi\|_{2,p,\sigma}\leq A\A^2$ we have that
$\|N(\phi)\|_{p,\sigma} \leq C \A^4$.

\medskip
As for the boundary condition we check directly from  expressions \eqref{bdrcondition}, \eqref{BDRCONDPHI2} and \eqref{BDRCONDPHI2} thta on every end $M_{k,\A}$ the following estimates hold
$$
\|\tilde{B}(u_2)\|_{\infty,\sigma}\leq C\A^3, \quad \|\mathcal{B}(\phi)\|_{\infty,\sigma}\leq C\A(\|\nabla \phi\|_{\infty,\sigma}+\|\phi\|_{\infty,\sigma})
$$
with $\mathcal{B}(\phi)$ linear in $\phi$.

\medskip
Setting $T(g,G) = \phi$ the linear operator given from proposition \ref{ProjectedProblemInvertTheory}, we recast problem
\eqref{System1''}-\eqref{BDRCONDPHI2}-\eqref{ORTHOGCOND} as the fixed point problem
$$
\phi=T(-\tilde{S}(u_2) - \N(\phi), \tilde{B}(u_2) - \mathcal{B}(\phi)) =: \T(\phi)
$$

in the ball
$$
B_{\A}^{X}:=\left\{\phi \in X / \|\phi\|_{2,p,\sigma}\leq
A\A^3\right\}
$$
where $X$ is the space of function $\phi\in W_{loc}^{2,p}(\M_{\A}\times \R)$
with the norm $\|\phi\|_{2,p,\sigma}$. Observe that
$$
\|\T(\phi_1) - \T(\phi_2)\|_X \leq C\|\N(\phi_1) -
\N(\phi_2)\|_{p,\sigma} \leq C\A \|\phi_1 - \phi_2\|_{X}, \quad \phi
\in B_{\A}^{X}.
$$
On the other hand, because $C$ and $A$ are universal constants and
taking $A$ large enough, we have that
$$
\|\T(\phi)\|_X \leq C (\|\tilde{S}(u_2)\|_{p,\sigma} +
\|\N(\phi)\|_{p,\sigma} + \|\tilde{B}(u_2)\|_{\infty,\sigma} + \|\mathcal{B}(\phi)\|_{\infty,\sigma}) \leq A\A^3, \quad \phi \in B_{\A}^{X}.
$$

Hence, the mapping $\T$ is a contraction from the ball $B_{\A}^{X}$
onto itself. From the contraction mapping principle we get a unique
solution $\phi$ as require. We denote the solution to \eqref{System1''}-\eqref{BDRCONDPHI2}-\eqref{ORTHOGCOND}
for $h$ fixed.

As for the Lipschitz character of $\Phi(h)$ it comes from a
lengthy by direct computation. We left to the reader to check on the
details of the proof of the following estimate

$$
\|\Phi(h_1) - \Phi(h_2)\|_{2,p,\sigma} \leq
C\A^2\|h_1 - h_2\|_{*}.
$$

\end{document}